\newtheorem{thm}{Theorem}[section]
 \newtheorem{cor}[thm]{Corollary}
 \newtheorem{lem}[thm]{Lemma}
 \newtheorem{prop}[thm]{Proposition}
 \theoremstyle{definition}
 \newtheorem{defn}[thm]{Definition}
 \theoremstyle{remark}
 \newtheorem*{ex}{Example}
 \numberwithin{equation}{section}
\begin{document}
\title[Liouville-type results for stationary maps]{Liouville-type results for stationary maps of a class of functional related to pullback metrics }
\author{Sa\"{\i}d Asserda}
\address{Ibn tofail university , Faculty of sciences, PO 242 Kenitra Morroco}
\email{asserda-said@univ-ibntofail.ac.ma}
\subjclass[2010]{Primary 58E20, Secondary 53C21}
\keywords{Pullback metrics, stress-energy tensor, conservation law}
\begin{abstract}
 We
 study a generalized functional related to the pullback metrics (3).
We derive the first variation formula  which yield stationary
maps. We introduce the stress-energy tensor which is naturally
linked to conservation law and yield monotonicity formula via the
coarea formula and comparison theorem in Riemannian geometry. A
version of this monotonicity inequalities  enables us to derive
some Liouville type results. Also we investigate the constant
Dirichlet boundary value problems and the generalized Chern type
results for tension field equation with respect to this
functional.
\end{abstract}
\maketitle
\section{Introduction}
Let  $u : (M^{m},g)\rightarrow (N^{n},h)$ be a smooth map between
Riemannian manifolds $(M^{m},g)$ and $(N^{n},h)$. Recently, Kawai
and Nakauchi [7] introduce a functional related to the pullback
metric $u^{*}h$ :
\begin{equation}\Phi(u)={1\over 4}\int_{M}\Vert u^{*}h\Vert^{2}dv_{g}\end{equation}
where $u^{*}h$ is the symetric $2$-tensor ( pullback metric )
defined by $$(u^{*}h)(X,Y)=<du(X),du(Y)>_{h}$$ for any vector
fields $X,Y$ on $M$ and $\Vert u^{*}h\Vert$ its norm $$\Vert
u^{*}h\Vert=\sqrt{\sum_{i;j=1}^{m}\left(\left<du(e_{i}),du(e_{j})\right>_{h}\right)^{2}}$$
 with respect to  a local orthonormal frame $(e_{1}\cdots,e_{m})$ on $(M,g)$. The map $u$ is stationary for $\Phi$ if it is a critical point of $\Phi(u)$ with respect to any compactly supported variation of $u$ and $u$ is  stationary stable  if the second variation for the functional $\Phi(u)$ is non-negative. When $M$ and $N$ are compact without boundary , the same authors show the non-existence of non-constant stable stationary map for $\Phi$ if $M$ (respectively $N$) is a standard sphere $\mathbb{S}^{m}$ ( respectively $\mathbb{S}^{n}$). Also they show that a stationary map of $\Phi$ is a constant map provided that $M$ is compact without boundary and $N$ is non-compact supporting a strictly $C^{2}$ convex function.  \\
\indent On the other hand, following Baird and Eells [2],  M.Ara
[1]  introduced the  $F$-harmonic maps, generalizing  harmonic
maps, and the stress $F$-energy tensor. Let $F :
[0,+\infty[\rightarrow [0,+\infty[$ be a $C^{2}$ function such
that $F(0)=0$ and $F^{'}(t)>0$ on $t\in]0,+\infty[$. A smooth map
$ u : M\rightarrow N$ is said to be an $F$-harmonic map if it is a
critical point of the following $F$-energy functional $E_{F}$
given by
\begin{equation}E_{F}(u)=\int_{M}F\left({\Vert du\Vert^{2}\over 2}\right)dv_{g},\end{equation} with respect to any compactly supported variation of $u$, where $\Vert du\Vert$ is the Hilbert-Schmid norm of the differential of $u$ :
$$\Vert du\Vert^{2}=trace_{g}u^{*}h=\sum_{i=1}^{m}<du(e_{i}),du(e_{j})>_{h}.$$
The stress $F$-energy tensor $S_{F}$ associated with $E_{F}$-energy  is given  at any map $u$ by  $$S_{F,u}(X,Y)=F\left({\Vert du\Vert^{2}\over 2}\right)<X,Y>_{g}-F^{'}\left({\Vert du\Vert^{2}\over 2}\right)<du(X),du(Y)>_{h}$$ for any vector fields $X,Y$ on $M$. Via the stress-energy tensor $S_{F}$ of $E_{F}$, monotonicity formula, Liouville-type results and the constant Dirichlet boundary-value problem were investigated recently by Dong and Wei  generalizing and refining the works of several authors ( see [5] and references therein) \\
\indent In this paper,  we generalize and unify the concept of
critical point of the functional $\Phi$. For this, we define the
functional $\Phi_{F}$ by
\begin{equation}\Phi_{F}(u)=\int_{M}F\left({\Vert
u^{*}h\Vert^{2}\over 4}\right)dv_{g}\end{equation} which is $\Phi$
if $F(t)=t$. We derive the first variation formula of $\Phi_{F}$
and we introduce the stress-energy tensor $S_{\Phi_{F}}$
associated to $\Phi_{F}$ which is naturally linked to conservation
law. The tensor $S_{\Phi_{F}}$ yield monotonicity formula via
coarea formula and comparison theorems in Riemannian geometry.
These monotonicity inequalities enable us to derive a large
classes of Liouville-type results for stationary maps for the
functional $\Phi_{F}$. As another consequence, we obtain the
unique constant solutions of constant Dirichlet boundary-value
problems on starlike domains for smooth maps satisfying a
conservation law. We also obtain  generalized Chern type results
for tension field equation with respect to the functional
$\Phi_{F}$. We mention that our results are extentions of results
of Nakauchi-Takenaka where they gave the first varaition foirmula,
the second variation formule,
the monotonicity formula and the Bochner type formula [10]. \\ \\
 \noindent The contents of this paper is as follows :\\ \\
 1- Introduction\\
 2- Functionals related to pullback metrics and conservation law\\
 3- Monotonicity formula and Liouvile-type results\\
 4- Constant Dirichlet boundary-value problems\\
 5- Generalized Chern type results
\section{Functionals related to pullback metrics and conservation law}
Let $F : [0,+\infty[\rightarrow[0,+\infty[$ be a $C^{2}$-function
such that $F(0)=0$ and $F^{'}>0$ on $]0,+\infty[$. let $ u:
M\rightarrow N$ be a smooth map from an $m$-dimensional Riemannian
manifold $(M,g)$ to a Riemannian manifolf $(N,h)$. We call $u$ a
stationary map for the functional
$$\Phi_{F}(u)=\int_{M}F\Bigl({\Vert u^{*}h\Vert^{2}\over 4}\Bigr)dV_{g}$$
if
$${d\over dt}\Phi_{F}(u_{t})|_{t=0}=0$$ for any compactly supported variation $u_{t} : M\rightarrow N ( -\epsilon<t<\epsilon)$ with $u_{0}=u$\\ \\
Let $\nabla$ and ${}^{N}\nabla$ always denote the Levi-civita
connetions of $M$ and $N$ respectively. Let $\tilde\nabla$ be the
induced connection on $u^{-1}TN$ defined by
${\tilde\nabla}_{X}W={}^{N}\nabla_{du(X)}W,$ where $X$ is a
tangent vector of $M$ and $W$ is a section of $u^{-1}TN$. We
choose a local orthonormal frame field $\{e_{i}\}_{i=1}^{m}$ on
$M$. We define an $u^{-1}TN$-valued $1$-forme $\sigma_{F,u}$ on
$M$ by \begin{equation}\sigma_{F,u}(.)=F^{'}\Bigl({\Vert
u^{*}h\Vert^{2}\over
4}\Bigr)\sum_{j=1}^{m}h(du(.),du(e_{j}))du(e_{j}).\end{equation}
When $F(t)=t,$ we have
$\sigma_{F,u}(.)=\sigma_{u}(.)=\sum_{j=1}^{m}h(du(.),du(e_{j}))du(e_{j})$,
as defined in [7], which give
$$\Vert u^{*}h\Vert^{2}=\sum_{i=1}^{m}<du(e_{i}),\sigma_{u}(e_{i})>_{h}$$ We define the tension field $\tau_{\Phi_{F}}(u)$  of $u$ with respect the functional $\Phi_{F}$ by
\begin{equation}
\tau_{\Phi_{F}}(u):=div_{g}\sigma_{F,u}\end{equation} where
$div_{g}\sigma_{F,u}$ denotes the divergence of $\sigma_{F,u}$ :
  \begin{eqnarray*}\tau_{\Phi_{F}}(u)&=&\sum_{i=1}^{m}\Bigl\{{\tilde\nabla}_{e_{i}}\Bigl(F^{'}\Bigl({\Vert u^{*}h\Vert^{2}\over 4}\Bigr)\sigma_{u}(e_{i})\Bigr)-F^{'}\Bigl({\Vert u^{*}h\Vert^{2}\over 4}\Bigr)\sigma_{u}(\nabla_{e_{i}}e_{i})\Bigr\}\\
&=&F^{'}\Bigl({\Vert u^{*}h\Vert^{2}\over
4}\Bigr)div_{g}\sigma_{u}+\sigma_{u}\Bigl\{\hbox{grad}\Bigl(F^{'}\Bigl({\Vert
u^{*}h\Vert^{2}\over 4}\Bigr)\Bigr)\Bigr\}
\end{eqnarray*}
\begin{lem}[First variation formula] Let $u : (M,g)\rightarrow (N,h)$ be a $C^{2}$ map. Then
  $${d\over dt}\Phi_{F}(u_{t})\big|_{t=0}=-\int_{M}<\tau_{\Phi_{F}}(u),du(X)>_{h}dV_{g},$$ where $X$ ( respectively $u_{t}$) is  any smooth vector field  with compact support on $M$ ( respectively any $C^{2}$ deformation of $u$).
  \end{lem}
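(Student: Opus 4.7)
The plan is to compute the variation pointwise, identify a divergence structure that realizes $\tau_{\Phi_F}(u)$, and integrate by parts. Let $u_t$ be a compactly supported variation with variation vector field $V=\partial_t u_t|_{t=0}\in\Gamma(u^{-1}TN)$; in the setting of the lemma one eventually specializes to $V=du(X)$ (e.g.\ by taking $u_t=u\circ\phi_t$ for the flow $\phi_t$ of $X$), but the core computation is cleanest when carried out for a general variation field and then specialized at the end.

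First I would fix a point $p\in M$ and work in a local orthonormal frame $\{e_i\}$ that is normal at $p$, so $\nabla_{e_i}e_j(p)=0$. Using that $\tilde\nabla$ is metric and torsion-free and that $\partial_t$ commutes with $e_i$, I would obtain
\[
\frac{\partial}{\partial t}\,h(du_t(e_i),du_t(e_j))\Big|_{t=0} = h(\tilde\nabla_{e_i}V,du(e_j))+h(du(e_i),\tilde\nabla_{e_j}V).
\]
Summing this against $h(du(e_i),du(e_j))$, using the symmetry in $i,j$, and recognizing $\sigma_u(e_i)=\sum_j h(du(e_i),du(e_j))du(e_j)$ gives
\[
\frac{\partial}{\partial t}\,\frac{\Vert u_t^{*}h\Vert^{2}}{4}\Big|_{t=0}=\sum_i h(\tilde\nabla_{e_i}V,\sigma_u(e_i)).
\]
Applying the chain rule to $F$ then yields $\partial_t F(\Vert u_t^{*}h\Vert^2/4)|_{t=0}=\sum_i h(\tilde\nabla_{e_i}V,\sigma_{F,u}(e_i))$, by the very definition of $\sigma_{F,u}$ in (4).

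Next I would recognize the right-hand side as a divergence minus a bulk term. Define a compactly supported vector field $Z$ on $M$ by $g(Z,W)=h(V,\sigma_{F,u}(W))$. At the point $p$ (normal frame),
\[
\operatorname{div}_gZ=\sum_i e_i\bigl(h(V,\sigma_{F,u}(e_i))\bigr)=\sum_i h(\tilde\nabla_{e_i}V,\sigma_{F,u}(e_i))+h\bigl(V,\operatorname{div}_g\sigma_{F,u}\bigr),
\]
so by (5) the pointwise identity becomes $\partial_t F(\Vert u_t^{*}h\Vert^2/4)|_{t=0}=\operatorname{div}_gZ-h(V,\tau_{\Phi_F}(u))$, an identity that is frame-independent and hence holds globally. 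Integrating over $M$, the divergence term drops by the compact support of $Z$ and Stokes' theorem, leaving $(d/dt)\Phi_F(u_t)|_{t=0}=-\int_M h(V,\tau_{\Phi_F}(u))\,dV_g$. Specializing to the inner variation $V=du(X)$ produces the stated formula.

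The calculation is largely routine; the only places requiring care are the combinatorial bookkeeping that turns the quartic expression $\Vert u^{*}h\Vert^2=\sum_{i,j}h(du(e_i),du(e_j))^2$ into a trace against $\sigma_u$ without losing the factor $\tfrac{1}{4}$, and the verification that the expansion of $\tau_{\Phi_F}(u)=\operatorname{div}_g\sigma_{F,u}$ given in the excerpt (the product-rule split into $F'\operatorname{div}_g\sigma_u+\sigma_u(\operatorname{grad}F')$) is consistent with the divergence produced in the integration-by-parts step. I expect no genuine obstacle, since the arguments mirror the classical first variation for $F$-harmonic maps with $\sigma_{F,u}$ playing the role of $F'(\Vert du\Vert^2/2)du$.
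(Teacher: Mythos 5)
Your proposal is correct and follows essentially the same route as the paper: both compute $\partial_t F(\Vert u_t^{*}h\Vert^{2}/4)\big|_{t=0}=\sum_i\langle\tilde\nabla_{e_i}V,\sigma_{F,u}(e_i)\rangle_h$ via the chain rule and the symmetry of $u^{*}h$, then integrate by parts against the compactly supported variation field to land on $-\int_M\langle\operatorname{div}_g\sigma_{F,u},du(X)\rangle_h\,dV_g$. Your only addition is making the integration-by-parts step explicit through the auxiliary vector field $Z$, which the paper leaves implicit.
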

  \begin{proof}Let $X$ be a vector field on $M$ with compact support. Let $U : ]-\epsilon,\epsilon[\times M\rightarrow N$ be any smooth deformation of $u$ such that
\begin{eqnarray*}
 U(0,x)&=&u(x)\\
 dU({\partial\over\partial t},X)\big|_{t=0}&=&du(X)
 \end{eqnarray*}
Put $u_{t}(x)=U(t,x)$ and $Y_{t}=du_{t}({\partial\over\partial
t},X)$ the variation field. Then
\begin{eqnarray*}
{\partial\over\partial t}F\left ({\Vert u_{t}^{*}h\Vert^{2}\over 4}\right )&=&{\partial\over\partial t}F\left ({\sum_{i,j=1}^{m}\left<du_{t}(e_{i}),du_{t}(e_{j})\right>^{2}_{h}\over 4}\right )\\
&=&F^{'}\left ({\Vert u_{t}^{*}h\Vert^{2}\over 4}\right ){1\over 4}{\partial\over\partial t}\left (\sum_{i,j=1}^{m}\left<du_{t}(e_{i}),du_{t}(e_{j})\right>_{h}\left<du_{t}(e_{i}),du_{t}(e_{j})\right>_{h}\right )\\
&=&F^{'}\left ({\Vert u_{t}^{*}h\Vert^{2}\over 4}\right )\sum_{i,j=1}^{m}\left<\nabla_{\partial\over\partial t}\left (du_{t}(e_{i})\right),du_{t}(e_{j})\right>_{h}\left<du_{t}(e_{i}),du_{t}(e_{j})\right>_{h}\\
&=&\sum_{i=1}^{m}\left< \sigma_{F,u_{t}}(e_{i}),{\tilde
\nabla}_{e_{i}}Y_{t}\right>_{h}
\end{eqnarray*}
Since $Y_{t}$ has compact support on $M$, using an integration by
parts we obtain
\begin{eqnarray*}
{d\over
dt}\Phi_{F}(u_{t})\big|_{t=0}&=&-\int_{M}\left<\sum_{i=1}^{m}{\tilde
\nabla}_{e_{i}}\left(
\sigma_{F,u}(e_{i})\right),Y_{0}\right>_{h}dv_{g}\\
&=&-\int_{M}\left<div_{g}\left(\sigma_{F,u}\right),du(X)\right>_{h}dv_{g}
\end{eqnarray*}
which is the first variation formula for $\Phi_{F}$.
\end{proof}
The first variation formula allows us to define the notion of
stationary maps for the functional $\Phi_{F}$.
\begin{defn}[Stationary map]
 A smooth map $u$ is called  stationary  for the functional $\Phi_{F}$ if it is a solution of the Euler-Lagrange equation
 $$
 div_{g}\sigma_{F,u}=0,
 $$
 equivalently
$$
F^{'}\Bigl({\Vert u^{*}h\Vert^{2}\over
4}\Bigr)div_{g}\sigma_{u}+\sigma_{u}\Bigl(\hbox{grad}\Bigl(F^{'}\Bigl({\Vert
u^{*}h\Vert^{2}\over 4}\Bigr)\Bigr)\Bigr\}=0$$
\end{defn}
\begin{ex}
\noindent 1- Every totally geodesic map $u$ , i.e $\nabla du=0$, is stationary for $\Phi_{F}$ \\
2- If $N=\mathbb{R}$ then $\Vert u^{*}h\Vert^{2}=\Vert du\Vert^{4}$. Hence $u$ is stationary for $\Phi_{F}$ if and only if $u$ is $G$-harmonique where $G(t)=F(t^{2})$.
\end{ex}
\indent Following Baird [3], for a smooth map $u :
(M,g)\rightarrow(N,h)$  we associate a symmetric $2$-tensor
$S_{\Phi_{F,u}}$ to the functional $\Phi_{F}$, called the
stress-energy tensor
\begin{equation}
S_{\Phi_{F,u}}(X,Y)=F\left({\Vert u^{*}h\Vert^{2}\over
4}\right)\left<X,Y\right>_{g}-F^{'}\left({\Vert
u^{*}h\Vert^{2}\over 4}\right)\left<\sigma_{u}(X),du(Y)\right>_{h}
\end{equation}
where $X,Y$ are vector fields on $M$.
\begin{prop}
 Let $u : (M,g)\rightarrow(N,h)$ a smooth map and let $S_{\Phi_{F,u}}$ be the associatd stress-energy tensor, then for all $x\in M$ and for each vector $X\in T_{x}M$, $$(\hbox{div} S_{\Phi_{F,u}})(X)=-\left<\hbox{div}_{g}\sigma_{F,u},du(X)\right>$$
where $$\sigma_{F,u}(X)=F^{'}\left({\Vert u^{*}h\Vert^{2}\over
4}\right)\sum_{i=1}^{m}\left<du(X),du(e_{i})\right>du(e_{i}).$$
\end{prop}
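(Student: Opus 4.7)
The plan is to compute $(\mathrm{div}\, S_{\Phi_F, u})(X)$ at an arbitrary point $x_0 \in M$ using the formula
$(\mathrm{div}\, S)(X) = \sum_{i=1}^m (\nabla_{e_i} S_{\Phi_F, u})(e_i, X)$, and to split the tensor into its two natural pieces: the ``potential'' piece $F(\|u^*h\|^2/4)\, g$ and the ``pullback'' piece $F'(\|u^*h\|^2/4) \langle \sigma_u(\cdot), du(\cdot)\rangle_h = \langle \sigma_{F,u}(\cdot), du(\cdot)\rangle_h$. To simplify bookkeeping, I would fix a local orthonormal frame $\{e_i\}$ that is normal at $x_0$, so that $\nabla_{e_i} e_j |_{x_0} = 0$; this makes $(\nabla_{e_i} S)(e_i, X)|_{x_0} = e_i(S(e_i,X)) - S(e_i, \nabla_{e_i}X)$.

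For the $Fg$-piece, the two $F$ terms coming from differentiating $F\langle e_i, X\rangle$ and from $-F\langle e_i, \nabla_{e_i}X\rangle$ cancel at $x_0$ (since at that point $e_i\langle e_i, X\rangle = \langle e_i, \nabla_{e_i}X\rangle$), leaving only $\sum_i e_i(F(\|u^*h\|^2/4))\langle e_i,X\rangle = X\bigl(F(\|u^*h\|^2/4)\bigr)$. For the pullback piece, I would differentiate $\langle \sigma_{F,u}(e_i), du(X)\rangle_h$ along $e_i$ using the induced connection $\tilde\nabla$, producing a term $\langle \tilde\nabla_{e_i}\sigma_{F,u}(e_i), du(X)\rangle$ and a term $\langle \sigma_{F,u}(e_i), \tilde\nabla_{e_i}du(X)\rangle$, then absorb $-\langle \sigma_{F,u}(e_i), du(\nabla_{e_i}X)\rangle$ via the identity $\tilde\nabla_{e_i}du(X) - du(\nabla_{e_i}X) = \nabla du(e_i, X)$, which is symmetric in its two arguments. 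Summing over $i$, this contributes $\langle \mathrm{div}_g\sigma_{F,u}, du(X)\rangle_h + \sum_i \langle \sigma_{F,u}(e_i), \nabla du(e_i, X)\rangle_h$.

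The main (and really the only) obstacle is verifying the crucial cancellation
\[
X\Bigl(F\bigl(\tfrac{\|u^*h\|^2}{4}\bigr)\Bigr) = \sum_{i=1}^m \bigl\langle \sigma_{F,u}(e_i), \nabla du(e_i, X)\bigr\rangle_h ,
\]
which is what makes $S_{\Phi_F, u}$ the correct stress-energy tensor. I would prove it by expanding the left-hand side as $F'(\|u^*h\|^2/4)\cdot\tfrac{1}{4}X(\|u^*h\|^2)$ and computing
\[
X(\|u^*h\|^2) = X\sum_{i,j}\langle du(e_i), du(e_j)\rangle_h^2 = 4\sum_{i,j}\langle du(e_i), du(e_j)\rangle_h\, \langle \tilde\nabla_X du(e_i), du(e_j)\rangle_h,
\]
using symmetry in $(i,j)$. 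Then, invoking the symmetry of the second fundamental form $\tilde\nabla_X du(e_i) = \nabla du(X, e_i) = \nabla du(e_i, X)$ at $x_0$, I regroup the $j$-sum into $\sigma_u(e_i)$, obtaining $X(\|u^*h\|^2)/4 = \sum_i \langle \nabla du(e_i, X), \sigma_u(e_i)\rangle_h$, and multiplying by $F'$ yields exactly the claimed identity.

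Combining the two pieces, the $X(F(\|u^*h\|^2/4))$ from the $Fg$-part exactly cancels the $\sum_i \langle \sigma_{F,u}(e_i), \nabla du(e_i,X)\rangle$ coming from the pullback part, so that $(\mathrm{div}\, S_{\Phi_F, u})(X) = -\langle \mathrm{div}_g \sigma_{F,u}, du(X)\rangle_h$ as required. Since $x_0$ was arbitrary, the identity holds everywhere.
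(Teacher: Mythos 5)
Your proposal is correct and follows essentially the same route as the paper: expand $(\mathrm{div}\,S_{\Phi_{F,u}})(X)$ termwise, use the symmetry $(\nabla_{e_i}du)(X)=(\nabla_{X}du)(e_i)$, and cancel $X\bigl(F(\Vert u^{*}h\Vert^{2}/4)\bigr)$ against $\sum_i\langle\sigma_{F,u}(e_i),(\nabla_X du)(e_i)\rangle$ via the identity $\tfrac14\nabla_X\Vert u^{*}h\Vert^{2}=\sum_i\langle(\nabla_X du)(e_i),\sigma_u(e_i)\rangle$, which is exactly the paper's opening computation. The only (harmless) difference is your use of a normal frame at the point, which cleans up the $\nabla_{e_k}e_k$ terms the paper carries along.
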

\begin{proof}
For any vector field $X$ on $M$ we have  \begin{eqnarray*}{1\over
4}\nabla_{X}\Vert
u^{*}h\Vert^{2}&=\sum_{i,j=1}^{m}\left<\left(\nabla_{X}du\right)(e_{i}),du(e_{j})\right>\left<du(e_{i}),du(e_{j})\right>
\\
&=\sum_{i=1}^{m}\left<\left(\nabla_{X}du\right)(e_{i}),\sigma_{u}(e_{i})\right>
\end{eqnarray*}
We compute
\begin{eqnarray*}
(div
S_{\Phi_{F,u}})(X)&=&\sum_{k=1}^{m}\left(\nabla_{e_{k}}\left(S_{\Phi_{F,u}}(e_{k},X)\right)-
S_{\Phi_{F,u}}(e_{k},\nabla_{e_{k}}X)-S_{\Phi_{F,u}}(\nabla_{e_{k}}e_{k},X)\right)\\
&=&\sum_{k=1}^{m}\Big\{\left(\nabla_{e_{k}}F({\Vert
u^{*}h\Vert^{2}\over 4})\right)<e_{k},X>+F({\Vert
u^{*}h\Vert^{2}\over 4})<e_{k},\nabla_{e_{k}}X>
-\\
&{}&\quad \nabla_{e_{k}}\left(<\sigma_{F,u}(e_{k}),du(X)>\right)-F({\Vert u^{*}h\Vert^{2}\over 4})<e_{k},\nabla_{e_{k}}X>\\
&{}&\quad +<\sigma_{F,u}(e_{k}),du(\nabla_{e_{k}}X)>\Big\}\\
&=&\nabla_{X}F({\Vert u^{*}h\Vert^{2}\over
4})-<div_{g}\sigma_{F,u},du(X)>-\\
&{}&\quad 
\sum_{k=1}^{m}\left<\sigma_{F,u}(e_{k}),\nabla_{e_{k}}(du(X))-du(\nabla_{e_{k}}X)\right>\\
&=&-<div_{g}\sigma_{F,u},du(X)>+\sum_{i=1}^{m}\left<(\nabla_{X}du)(e_{i}),\sigma_{F,u}(e_{i})\right>
\sum_{k=1}^{m}<(\nabla_{e_{k}}du)(X),\sigma_{F,u}(e_{k})>
\end{eqnarray*}
Since $(\nabla_{e_{k}}du)(X)=(\nabla_{X}du)(e_{k})$ we obtain
$$(div S_{\Phi_{F,U}})(X)=-<div_{g}\sigma_{F,u},du(X)>$$
\end{proof}
\begin{defn}
 A map $u : M\rightarrow N$ is said to satisfy an $\Phi_{F}$-conservation law if $S_{\Phi_{F,u}}$ is divergence free, i.e the $(0,1)$-type tensor field $\hbox{div}S_{\Phi_{F,u}}$ vanishes identically (div$S_{\Phi_{F,u}}=0$).
 \end{defn}
In particular if $u$ is a stationary map for $\Phi_{F}$ then
$S_{\Phi_{F,u}}$ is divergence free. In general the conserve will
not be true i.e $\hbox{div}S_{\Phi_{F,u}}=0$, then we cannot
conclude that $u$ is a stationary map for the functional
$\Phi_{F}$. However, in the spacial case when $u$ is a submersive
mapping, we do have the equivalence.
\begin{cor}
 \ If $ u :M\rightarrow N$ is a submersion almost everywhere, then $u$ is a stationary map for $\Phi_{F}$ if and only if $\hbox{div}S_{\Phi_{F,u}}=0$.
 \end{cor}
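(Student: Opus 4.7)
The plan is to deduce the corollary directly from the divergence identity established in the preceding Proposition, namely
$$(\operatorname{div} S_{\Phi_{F,u}})(X) = -\langle \operatorname{div}_g \sigma_{F,u}, du(X)\rangle_h$$
for every vector field $X$ on $M$. Recalling that, by definition, $u$ is stationary for $\Phi_F$ exactly when $\tau_{\Phi_F}(u) = \operatorname{div}_g \sigma_{F,u} = 0$, one direction of the equivalence is immediate: if $u$ is stationary then the right-hand side of the identity vanishes for every $X$, hence $\operatorname{div} S_{\Phi_{F,u}} \equiv 0$.

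For the converse, I would assume $\operatorname{div} S_{\Phi_{F,u}} = 0$ and aim to show $\operatorname{div}_g \sigma_{F,u}$ vanishes as a section of $u^{-1}TN$. The identity then reads
$$\langle \operatorname{div}_g \sigma_{F,u}(x), du_x(X)\rangle_h = 0 \qquad \text{for all } X \in T_xM, \ x\in M.$$
Let $U \subset M$ be the open set on which $u$ is a submersion; by hypothesis $U$ has full measure in $M$, and in particular it is dense. At any point $x\in U$ the differential $du_x : T_xM \to T_{u(x)}N$ is surjective, so $\{du_x(X) : X\in T_xM\} = T_{u(x)}N$. The preceding identity therefore forces $\operatorname{div}_g \sigma_{F,u}(x) = 0$ for every $x \in U$.

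Finally, since $u$ is smooth, the section $\operatorname{div}_g \sigma_{F,u}$ of $u^{-1}TN$ is continuous on $M$. Being zero on the dense subset $U$, it vanishes identically on $M$, which shows $u$ is stationary for $\Phi_F$. This completes the equivalence.

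The only non-routine step is the converse implication, and even there the difficulty is modest: it consists in justifying that the scalar identity $\langle V, du(X)\rangle_h = 0$ for all $X$ actually forces $V = 0$ when $du$ has dense surjectivity, which is precisely why the submersion-almost-everywhere hypothesis is needed. No additional analytic input (such as a Bochner or elliptic argument) is required.
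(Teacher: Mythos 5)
Your argument is correct and is exactly the intended one: the paper states this corollary without proof, as an immediate consequence of the identity $(\operatorname{div} S_{\Phi_{F,u}})(X)=-\langle\operatorname{div}_g\sigma_{F,u},du(X)\rangle_h$ from the preceding Proposition. Your handling of the converse — surjectivity of $du_x$ on the full-measure (hence dense, since the submersion set is open and any nonempty open set has positive measure) set, followed by continuity of $\operatorname{div}_g\sigma_{F,u}$ — is precisely the standard justification and fills in the detail the paper leaves implicit.
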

\section{Monotonicity formula and Liouville-type results}
In this section, we will establish the monotonicity formula for
the functional $\Phi_{F}$ on complete Riemannian manifolds with a
pole. We recall a pole is a point $x_{0}\in M$ such that the
exponential map $ \exp_{x_{0}} : T_{x_{0}}M\rightarrow M$ is a
global diffeomorphism. For a map : $u : (M,g)\rightarrow (N,h)$
satisfying an $\Phi_{F}$-conservation law , the stress-energy
$2$-tensor $S_{\Phi_{F,u}}$   yield monotonicity formula via corea
formula and comparison theorems in Riemannian geometry. These
monotonicity inequalities enable us to derive a large classes of
Liouville-type results for stationary maps for the functional
$\Phi_{F}$. We mention that Nakauchi and Takenaka gave the
monotonicity formula
for the functional $\Phi$ and our result is a generalization of their result to the $F$-functional  $\Phi_{F}$ [10].\\ For a vector field $X$ on $M$, we denote by $\theta_{X}$ its dual
$1$-form, i.e
$$\theta_{X}(Y)=<X,Y>_{g}$$
Let $\nabla\theta_{X}$ the $2$-tensor \begin{eqnarray*}(\nabla\theta_{X})(Y,Z)&=&(\nabla_{Y}\theta_{X}(Z)\\
&=&Y(\theta_{X}(Z))-\theta_{X}(\nabla_{Y}Z)\\
&=&<\nabla_{Y}X,Z>_{g}
\end{eqnarray*}
Following Baird [3], the contraction of  stress-energy tensor
$S_{\Phi_{F,u}}$ with  $X$ is given by
\begin{equation}\hbox{div}\left(i_{X}S_{\Phi_{F,u}}\right)=\left(\hbox{div}S_{\Phi_{F,u}}\right)(X)+
\left<S_{\Phi_{F,u}},\nabla\theta_{X}\right>\end{equation} where
$(i_{X}S_{\Phi_{F,u}})(Y):=S_{\Phi_{F,u}}(X,Y)$ and
$$
\left<S_{\Phi_{F,u}},\nabla\theta_{X}\right>=
\sum_{i,j=1}^{m}S_{\Phi_{F,u}}(e_{i},e_{j})\left<\nabla_{e_{i}}X,e_{j}\right>_{g}.$$
Let $D$ be any bounded domain of $M$ with $C^{1}$-boundary. We
integrate the formula (7), by Stokes's theorem, we obtain the
basis of monotonicity formula
\begin{equation}\int_{\partial D}S_{\Phi_{F,u}}(X,\nu)ds_{g}=\int_{D}\left\{\left(\hbox{div}S_{\Phi_{F,u}}\right)(X)+
\left<S_{\Phi_{F,u}},\nabla\theta_{X}\right>\right\}dv_{g}\end{equation}
where $\nu$ is unit outward normal vector field along $\partial D$
with $(m-1)$-dimensional volume element $ds_{g}$. In particular,
if $u$ satisfies an $\Phi_{F}$-conservation law, for almost
$R_{2}>R_{1}\geq 0$ we have
\begin{equation}\int_{\partial B(R_{2})}S_{\Phi_{F,u}}(X,\nu)ds_{g}-\int_{\partial B(R_{1})}S_{\Phi_{F,u}}(X,\nu)ds_{g}=\int_{B(R_{2})\setminus B(R_{1})}
\left<S_{\Phi_{F,u}},\nabla\theta_{X}\right>dv_{g}
\end{equation}
Following Kassi[8], we introduce the following
\begin{defn}
 \ The upper (lower)  $F$-degree $d_{F}$  of the function $F$ is defined to be $$d_{F}=\sup_{t\geq 0}{tF^{'}(t)\over F(t)}\quad (
 l_{F}=\inf_{t\geq 0}{tF^{'}(t)\over F(t)})$$
 \end{defn}
We assume that $d_{F}$ is finite. The main result in this section
is the following theorem which give  monotonicity formula for the
functional $\Phi_{F}$.
\begin{thm}
 \ Let $(M,g)$ be a complete Riemannian manifold with a pole $x_{0}$. Assume that there exist two positive functions $h_{1}(r)$ and $h_{2}(r)$ such that
\begin{equation}h_{1}(r)\left[g-dr\otimes dr\right]\leq\hbox{Hess}(r)\leq h_{2}(r)\left[g-dr\otimes dr\right]
\end{equation}
on $M\setminus\{x_{0}\}$ where $r(x)=d_{g}(x,x_{0})$. Let $\phi :
\mathbb{R}^{+}\rightarrow \mathbb{R}^{+}$ be a $C^{2}$-function
such that $\phi(0)=0$, $\phi^{'}(t)>0$ on $]0,+\infty[$ and
$$\phi^{'}(r)h_{2}(r)\geq\phi^{''}(r)$$ on $M\setminus\{x_{0}\}$.
If $u : (M,g)\rightarrow(N,h)$ satisfies an
$\Phi_{F}$-conservation law, then for $0<R_{1}\leq R_{2}$ we have
\begin{equation}{1\over e^{G(R_{1})}}\int_{ B(x_{0},R_{1})}F\left({\Vert u^{*}h\Vert^{2}\over 4}\right)dv_{g}\leq{1\over e^{G(R_{2})}}\int_{B(x_{0},R_{2})}F\left({\Vert u^{*}h\Vert^{2}\over 4}\right)dv_{g}\end{equation}
where $G(R)$ is a  primitive of the lower bound of
$$ R\rightarrow\left(\phi^{'}(R)\right)^{-1}\displaystyle\inf_{ B(x_{0},R)}\left(\phi^{''}(r)+((m-1)h_{1}(r)-4d_{F}h_{2}(r))\phi^{'}(r)\right).$$
In particular, if
$$
\int_{ B(x_{0},R)}F\left({\Vert u^{*}h\Vert^{2}\over
4}\right)dv_{g} =o(e^{G(R)})$$ then $u$ is a constant map
\end{thm}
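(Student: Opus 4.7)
The plan is to apply the integrated divergence identity (8) to the radial vector field $X=\phi'(r)\nabla r=\nabla(\phi\circ r)$. Because $u$ satisfies the $\Phi_F$-conservation law, $(\mathrm{div}\,S_{\Phi_F,u})(X)\equiv 0$, so (8) applied to $D=B(x_0,R)$ reduces to
$$
\int_{\partial B(x_0,R)}S_{\Phi_F,u}(X,\nu)\,ds_g=\int_{B(x_0,R)}\langle S_{\Phi_F,u},\nabla\theta_X\rangle\,dv_g.
$$
Setting $I(R):=\int_{B(x_0,R)}F(\|u^*h\|^2/4)\,dv_g$, the coarea formula gives $I'(R)=\int_{\partial B(R)}F\,ds_g$. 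The strategy is to bound the left-hand side above by $\phi'(R)I'(R)$ and the right-hand side below by $\phi'(R)G'(R)I(R)$, producing the differential inequality $(\log I)'(R)\ge G'(R)$ whose integration yields (11).

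For the pointwise analysis, choose an adapted orthonormal frame $e_1,\dots,e_{m-1},e_m=\nabla r$ that diagonalizes $\mathrm{Hess}(r)$ on $(\nabla r)^\perp$ with eigenvalues $\lambda_i\in[h_1(r),h_2(r)]$. Since $\nabla_{\nabla r}\nabla r=0$, the tensor $\nabla\theta_X$ is diagonal in this frame with $(\nabla\theta_X)(e_m,e_m)=\phi''(r)$ and $(\nabla\theta_X)(e_i,e_i)=\phi'(r)\lambda_i$ for $i<m$. Writing $A_{ij}:=\langle\sigma_u(e_i),du(e_j)\rangle_h$, one checks $A_{ii}\ge 0$ and $\sum_{i=1}^m A_{ii}=\|u^*h\|^2$. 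Substituting into (6) and applying the hypothesis $\phi''(r)\le\phi'(r)h_2(r)$, the eigenvalue bounds $h_1(r)\le\lambda_i\le h_2(r)$, and the defining inequality $tF'(t)\le d_F F(t)$ (which translates to $F'\|u^*h\|^2\le 4d_F F$), one obtains the pointwise lower bound
$$
\langle S_{\Phi_F,u},\nabla\theta_X\rangle\ge F\bigl[\phi''(r)+\bigl((m-1)h_1(r)-4d_Fh_2(r)\bigr)\phi'(r)\bigr].
$$
On $\partial B(R)$, $\nu=\nabla r$ and $F'A_{mm}\ge 0$ give $S_{\Phi_F,u}(X,\nu)=\phi'(R)(F-F'A_{mm})\le\phi'(R)F$.

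Integrating the pointwise bound and chaining the resulting inequalities,
$$
\phi'(R)G'(R)I(R)\le\int_{B(R)}\langle S_{\Phi_F,u},\nabla\theta_X\rangle\,dv_g=\int_{\partial B(R)}S_{\Phi_F,u}(X,\nu)\,ds_g\le\phi'(R)I'(R),
$$
using that $\phi''(r)+((m-1)h_1(r)-4d_Fh_2(r))\phi'(r)\ge\phi'(R)G'(R)$ on $B(R)$ by the definition of $G$. Dividing by $\phi'(R)>0$ yields $(\log I)'(R)\ge G'(R)$, and integration from $R_1$ to $R_2$ produces (11). For the Liouville statement, the hypothesis $I(R)=o(e^{G(R)})$ combined with (11) forces $I(R_1)e^{-G(R_1)}\le 0$ for every $R_1>0$, hence $I\equiv 0$; since $F(0)=0$ and $F'>0$, this gives $\|u^*h\|^2\equiv 0$ and $u$ is constant. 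The main technical obstacle is the pointwise lower bound: the term $-F'A_{mm}\phi''(r)$ has no fixed sign and must be absorbed by trading $\phi''$ against $\phi'h_2$ via the hypothesis, after which the $F'$-contributions recombine into $F'\|u^*h\|^2$ and are controlled by the finiteness of the upper $F$-degree $d_F$.
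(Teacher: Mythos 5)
Your proposal is correct and follows essentially the same route as the paper: apply identity (8) to $X=\phi'(r)\partial/\partial r$ on geodesic balls, bound the boundary term above by $\phi'(R)\int_{\partial B_R}F$, establish the pointwise lower bound $\langle S_{\Phi_F,u},\nabla\theta_X\rangle\ge(\phi''+((m-1)h_1-4d_Fh_2)\phi')F$ via the Hessian bounds, the sign of the $(\phi'h_2-\phi'')A_{mm}$ term, and $F'\|u^*h\|^2\le 4d_FF$, then integrate the logarithmic differential inequality obtained from the coarea formula. Your diagonalization of $\mathrm{Hess}(r)$ is a harmless cosmetic variant of the paper's matrix estimate (which implicitly uses positive semi-definiteness of $A_{ij}$), and you actually spell out the Liouville conclusion more carefully than the paper does.
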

\begin{proof}
Denoted $D=B_{R}(x_{0})$ the geodesic ball of radius $R$ centred
at $x_{0}$. Taking $X=\phi^{'}(r){\partial\over\partial r}\in
T_{x_{0}}M$ where ${\partial\over\partial r}$ denoted unit radial
vector field. Choosing a local orthonormal frame field $\Big\{
e_{1},\cdots,e_{m-1},e_{m}={\partial\over\partial r}\Big\}$ on
$M$. Since $u$ satisfies an $\Phi_{F}$-conservation law, applying
formula (8) to $D=B(x_{0},R)$ and
$X=\phi^{'}(r){\partial\over\partial r}$  we have
\begin{eqnarray*}\int_{B_{R}(x_{0})}
<S_{\Phi_{F,u}}&,&\nabla\theta_{X}>dv_{g}=\int_{\partial B_{R}(x_{0})}S_{\Phi_{F,u}}(X,\nu)ds_{g}\qquad\quad{}\\
&=&\int_{\partial B_{R}(x_{0})}F({\Vert u^{*}h\Vert^{2}\over
4})g(X,\nu)ds_{g}-
\int_{\partial B_{R}(x_{0})}F^{'}({\Vert u^{*}h\Vert^{2}\over 4})h(\sigma_{u}(X),du(\nu))ds_{g}\\
&=&\phi^{'}(R)\left\{\int_{\partial B_{R}(x_{0})}F({\Vert
u^{*}h\Vert^{2}\over 4})ds_{g}
-\int_{\partial B_{R}(x_{0})}F^{'}({\Vert u^{*}h\Vert^{2}\over 4})h(\sigma_{u}({\partial\over\partial r}),du({\partial\over\partial r}))ds_{g}\right\}\\
&=&\phi^{'}(R)\left\{\int_{\partial B_{R}(x_{0})}F({\Vert
u^{*}h\Vert^{2}\over 4})ds_{g}
-\int_{\partial B_{R}(x_{0})}F^{'}({\Vert u^{*}h\Vert^{2}\over 4})\sum_{i=1}^{m}h(du(e_{i}),du({\partial\over\partial r}))^{2}ds_{g}\right\}\\
&\leq &\phi^{'}(R)\int_{\partial B_{R}(x_{0})}F({\Vert
u^{*}h\Vert^{2}\over 4})ds_{g}
\end{eqnarray*}
Now, we will compute the item $<S_{\Phi_{F,u}},\nabla\theta_{X}>$
on the left hand side. For this purpose, using local orthonormal
frame field $\left\{e_{1},\cdots,e_{m-1},{\partial\over\partial
r}\right\}$, it is easy to see that
$$\nabla_{\partial\over\partial
r}X=\phi^{''}{\partial\over\partial
r},\quad\nabla_{e_{i}}X=\phi^{'}\sum_{k=1}^{m}\hbox{Hess}(r)(e_{i},e_{k})e_{k},\quad
1\leq i\leq m-1, $$
$$\hbox{div}X=\phi^{''}+\phi^{'}\sum_{k=1}^{m-1}\hbox{Hess}(r)(e_{k},e_{k})$$
where Hess(.) denoted the Hessian operator, i.e
$$\hbox{Hess}(r)(e_{i},e_{j})=\nabla_{e_{i}}\nabla_{e_{j}}r-\left(\nabla_{e_{i}}e_{j}\right)r.$$
So
$$
h(\sigma_{u}(e_{\alpha}),du(e_{\beta}))g(\nabla_{e_{\alpha}}X,e_{\beta})
=\phi^{'}\hbox{Hess}(r)(e_{i},e_{j})h(\sigma_{u}(e_{i}),du(e_{j}))
+h(\sigma_{u}({\partial\over\partial
r}),du({\partial\over\partial r}))
$$
 Then
 \begin{eqnarray*}
 <S_{\Phi_{F,u}},\nabla\theta_{X}>&=&F\left({\Vert u^{*}h\Vert^{2}\over 4}\right)\hbox{div}X -
 F^{'}\left({\Vert u^{*}h\Vert^{2}\over 4}\right)\sum_{\alpha,\beta=1}^{m}h(\sigma_{u}(e_{\alpha}),du(e_{\beta}))g(\nabla_{e_{\alpha}}X,e_{\beta})\\
 &=&F\left({\Vert u^{*}h\Vert^{2}\over 4}\right)(\phi^{''}+\phi^{'}\sum_{k=1}^{m-1}\hbox{Hess}(r)(e_{k},e_{k}))-\\
 &{}& F^{'}\left({\Vert u^{*}h\Vert^{2}\over 4}\right)\Big\{\phi^{'}\sum_{i,j=1}^{m-1}\hbox{Hess}(r)(e_{i},e_{j})h(\sigma_{u}(e_{i}),du(e_{j}))
 +\phi^{''}h(\sigma_{u}({\partial\over\partial r}),du({\partial\over\partial r}))\Big\}\\
 &\geq & F\left({\Vert u^{*}h\Vert^{2}\over 4}\right)\left(\phi^{''}+(m-1)h_{1}\phi^{'}\right)-\\
 &{}& F^{'}\left({\Vert u^{*}h\Vert^{2}\over 4}\right)\Big\{\phi^{'}h_{2}\sum_{i=1}^{m-1}h(\sigma_{u}(e_{i}),du(e_{i}))+
 \phi^{''}h(\sigma_{u}({\partial\over\partial r},du({\partial\over\partial r})))\Big\}\\
 &=&F\left({\Vert u^{*}h\Vert^{2}\over 4}\right)\left(\phi^{''}+(m-1)h_{1}\phi^{'}\right)-\\
 &{}& \phi^{'}h_{2}F^{'}\left({\Vert u^{*}h\Vert^{2}\over 4}\right)\Vert u^{*}h\Vert^{2}+
 (\phi^{'}h_{2}-\phi^{''})\sum_{i=1}^{m}(h(du({\partial\over\partial r}),du(e_{i})))^{2}\\
 &\geq &\left(\phi^{''}+((m-1)h_{1}-4d_{F}h_{2})\phi^{'}\right)F\left({\Vert u^{*}h\Vert^{2}\over 4}\right)
 \end{eqnarray*}
 Hence
 $$<S_{\Phi_{F,u}},\nabla\theta_{X}>\geq\left(\phi^{''}(r)+((m-1)h_{1}(r)-4d_{F}h_{2}(r))\phi^{'}(r)\right)F\left({\Vert u^{*}h\Vert^{2}\over 4}\right)$$
 and
 $$
\int_{\partial B_{R}(x_{0})}F\left({\Vert u^{*}h\Vert^{2}\over
4}\right)ds_{g}\geq H(R)\int_{ B_{R}(x_{0})}F\left({\Vert
u^{*}h\Vert^{2}\over 4}\right)dv_{g}$$ where $H(R)$ is a lower
bound of $$
R\rightarrow\left(\phi^{'}(R)\right)^{-1}\displaystyle\inf_{
B(x_{0},R)}\left(\phi^{''}(r)+((m-1)h_{1}(r)-4d_{F}h_{2}(r))\phi^{'}(r)\right)$$
The coarea formula implies that $${d\over
dR}\int_{B_{R}(x_{0})}F\left({\Vert u^{*}h\Vert^{2}\over
4}\right)dv_{g}=\int_{\partial B_{R}(x_{0})}F\left({\Vert
u^{*}h\Vert^{2}\over 4}\right)ds_{g}$$ Hence
\begin{equation}
{\displaystyle{d\over
dR}\displaystyle\int_{B_{R}(x_{0})}F\left({\Vert
u^{*}h\Vert^{2}\over
4}\right)dv_{g}\over\displaystyle\int_{B_{R}(x_{0})}F\left({\Vert
u^{*}h\Vert^{2}\over 4}\right)dv_{g}}\geq H(R)\end{equation} for
almost every $R>0$. By integration (12) over $[R_{1},R_{2}]$, we
have
$$
\log\int_{B_{R_{2}}(x_{0})}F\left({\Vert u^{*}h\Vert^{2}\over
4}\right)dv_{g}-\log\int_{B_{R_{1}}(x_{0})}F\left({\Vert
u^{*}h\Vert^{2}\over 4}\right)dv_{g}\geq G(R_{2})-G(R_{1})$$ This
proves monotonicity inequality. The constancy of $u$ follows by
letting $R_{2}$ to infinity in (11).
\end{proof}
The rest of this section is devoted to derive some Liouville-type
results under some explicit curvatures conditions on $(M,g)$ with
a pole $x_{0}$. The radial curvature $K_{r}$ ( respectively radial
Ricci curvature $Ric_{r}$) of $M$  is the restriction of the
sectional curvature function ( respectively the Ricci curvature
function ) to all planes which contain the unit vector $\nabla r$
in $T_{x}M$ tangent to the unique geodesic joining $x_{0}$ to $x$
and pointing away from $x_{0}$. The tensor $g-dr\otimes dr$ is
trivial on the radial direction $\nabla r$ and equal to $g$ on the
orthogonal complement $[\nabla r]^{\perp}$. We regard how $K_{r}$
varies as long as we have Hessian comparison estimates with bounds
satisfying (10) with  $\phi(t)={t^{2}\over 2}$. We collect some
Liouville-type results for maps satisfying an
$\Phi_{F}$-conservation law in the following two theorems.
\begin{thm}
 Let $ u : (M,g)\rightarrow (N,h)$ be a $C^{2}$ map satisfying an $\Phi_{F}$-conservation law. The $u$ is constant  provided one of the following conditions is satisfied :\\
 (i) $-\alpha^{2}\leq K_{r}\leq -\beta^{2}$ with $\alpha>0,\beta>0$ and $(m-1)\beta-4d_{F}\alpha\geq 0  $ and $$\int_{B_{R}(x_{0})}F\left({\Vert u^{*}h\Vert^{2}\over 4}\right)dv_{g}=o\left(R^{(m-{4d_{F}\alpha\over\beta})}\right)$$
 (ii) $K_{r}=0$ with $m-4d_{F}>0$ and
 $$\int_{B_{R}(x_{0})}F\left({\Vert u^{*}h\Vert^{2}\over 4}\right)dv_{g}=o\left(R^{(m-4d_{F})}\right)$$
(iii) $-{A\over(1+r^{2})^{1+\epsilon}}\leq
K_{r}\leq-{B\over(1+r^{2})^{1+\epsilon}}$ with $\epsilon>0, A\geq
0, 0\leq B<2\epsilon$ and $1+(m-1)(1-{B\over
2\epsilon})-4d_{F}e^{A\over 2\epsilon}>0$ and
$$\int_{B_{R}(x_{0})}F\left({\Vert u^{*}h\Vert^{2}\over 4}\right)dv_{g}=o\big(R^{\big((m-1)(1-{B\over 2\epsilon})-4d_{F}e^{A\over 2\epsilon}+1\big)}\big)$$
(iv) $-\alpha^{2}\leq K_{r}\leq 0,\ Ric_{r}\leq -\beta^{2}$ where
$\alpha>0,\beta>0,\ \beta-4d_{F}\alpha\geq 0$ and
$$\int_{B_{R}(x_{0})}F\left({\Vert u^{*}h\Vert^{2}\over 4}\right)dv_{g}=o\big(R^{2(1-{2d_{F}\alpha\over\beta})}\big)$$
\end{thm}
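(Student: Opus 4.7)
The plan is to apply the monotonicity formula of Theorem 3.2 to each of the four cases with the specific test function $\phi(t) = t^2/2$, flagged in the paragraph preceding the statement. With this choice $\phi'(r) = r$ and $\phi''(r) = 1$, so the admissibility condition (10) reduces to $r\,h_2(r) \geq 1$ and the key integrand becomes
\[
1 + r\bigl[(m-1)h_1(r) - 4 d_F h_2(r)\bigr].
\]
For each curvature hypothesis I would extract $h_1, h_2$ from a classical (pointwise or integrated) comparison theorem, bound the infimum of this expression on $B(x_0,R)$ from below, and integrate the resulting $H(R)$ to produce $G(R) = \lambda\log R + O(1)$ with $\lambda$ equal to the exponent appearing in the growth hypothesis. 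The condition $\int_{B_R(x_0)}F(\Vert u^*h\Vert^2/4)\,dv_g = o(R^\lambda)$ then matches the $o(e^{G(R)})$ constancy criterion of Theorem 3.2 and forces $u$ to be constant.

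In Case (i) the Hessian comparison theorem of Greene--Wu applied to pinched negative curvature gives $h_1(r) = \beta\coth(\beta r)$ and $h_2(r) = \alpha\coth(\alpha r)$. The key observation is that $g(x) := x\coth x$ is nondecreasing with $g(0^+) = 1$ and satisfies $g(v)/g(u) \leq v/u$ for $v \geq u > 0$ (since $\coth$ is decreasing); combining $g(\alpha r) \leq (\alpha/\beta)g(\beta r)$ with the hypothesis $(m-1)\beta \geq 4d_F\alpha$ yields the lower bound $m - 4d_F\alpha/\beta$ for the inner expression, hence the claimed exponent $m - 4d_F\alpha/\beta$ in $G(R)$. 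Case (ii) is the limit $\alpha, \beta \downarrow 0$: the Euclidean-model Hessian $h_1 = h_2 = 1/r$ collapses the inner expression to the constant $m - 4d_F$. For Case (iii) I would invoke the refined Greene--Wu comparison for decaying curvature of order $(1+r^2)^{-1-\epsilon}$: solving the model Jacobi ODE $\psi'' + K(r)\psi = 0$ at both extreme curvature values shows that $rh_1(r) \to 1 - B/(2\epsilon)$ and $rh_2(r) \to e^{A/(2\epsilon)}$ as $r \to \infty$, recovering the stated exponent.

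The hard part will be Case (iv), where the Hessian upper bound $K_r \leq 0$ only yields the weak estimate $h_1(r) = 1/r$ from below; feeding this directly into the formula produces $1 + (m-1) - 4 d_F (\alpha r)\coth(\alpha r)$, which tends to $-\infty$, so the pointwise Hessian estimate used in Theorem 3.2 is insufficient. One must reopen the proof of Theorem 3.2 and replace the $(m-1)h_1$ term by the full trace $\sum_{i=1}^{m-1}\mathrm{Hess}(r)(e_i,e_i) = \Delta r$ together with the Riccati equation $(\Delta r)' = -|\mathrm{Hess}(r)|^2 - \mathrm{Ric}(\partial_r,\partial_r)$; the hypothesis $\mathrm{Ric}_r \leq -\beta^2$ combined with the pointwise bound $|\mathrm{Hess}(r)|^2 \leq (m-1)\alpha^2\coth^2(\alpha r)$ coming from $K_r \geq -\alpha^2$ would then produce a lower bound on $\Delta r$ that ultimately delivers the exponent $2(1 - 2d_F\alpha/\beta)$.
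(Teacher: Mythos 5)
Your overall strategy --- Theorem 3.2 with $\phi(t)=t^{2}/2$ together with the Greene--Wu comparison estimates of Lemma 3.5 --- is exactly the paper's, and cases (i)--(iii) are sound. Indeed your case (i) is handled more carefully than in the paper's own computation: the inequality $\alpha r\coth(\alpha r)\leq(\alpha/\beta)\,\beta r\coth(\beta r)$ (valid because $\coth$ is decreasing and the pinching forces $\alpha\geq\beta$), combined with $x\coth x\geq 1$ and the hypothesis $(m-1)\beta-4d_{F}\alpha\geq 0$, correctly gives the infimum $m-4d_{F}\alpha/\beta$, which is the exponent actually appearing in the statement, whereas the paper's displayed evaluation of that infimum as $1+(m-1)\beta-4d_{F}\alpha$ does not match it. One small correction in case (iii): the Greene--Wu bounds $rh_{1}(r)=1-B/(2\epsilon)$ and $rh_{2}(r)=e^{A/(2\epsilon)}$ hold for \emph{all} $r>0$, not merely asymptotically; this is what you need to control the infimum over all of $B(x_{0},R)$, so the phrase ``as $r\to\infty$'' should be dropped.

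The genuine gap is in case (iv). You correctly see that the termwise Hessian lower bound is useless there and that one must retain the full trace $\sum_{i=1}^{m-1}\mathrm{Hess}(r)(e_{i},e_{i})=\Delta r$; this is precisely what the paper does, quoting Lemma 3.5(d): under $-\alpha^{2}\leq K_{r}\leq 0$ and $\mathrm{Ric}_{r}\leq-\beta^{2}$ one has $\Delta r\geq\beta\coth(\beta r)$, whence $1+r\Delta r-4d_{F}\alpha r\coth(\alpha r)\geq 1+(1-4d_{F}\alpha/\beta)\,\beta r\coth(\beta r)\geq 2(1-2d_{F}\alpha/\beta)$. But your proposed mechanism for proving that Laplacian lower bound fails: the traced Riccati inequality $(\Delta r)'\geq\beta^{2}-|\mathrm{Hess}(r)|^{2}$ together with $|\mathrm{Hess}(r)|^{2}\leq(m-1)\alpha^{2}\coth^{2}(\alpha r)$ integrates to $\Delta r(r)\geq\Delta r(r_{0})+\bigl(\beta^{2}-(m-1)\alpha^{2}\bigr)(r-r_{0})+O(1)$, and since $\mathrm{Ric}_{r}$ is a sum of $m-1$ radial sectional curvatures each at least $-\alpha^{2}$, the hypotheses force $(m-1)\alpha^{2}\geq\beta^{2}$, so this lower bound is at best bounded and generically tends to $-\infty$; it never recovers $\beta\coth(\beta r)$. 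The repair is simply to invoke the Greene--Wu Laplacian comparison (Lemma 3.5(d)) as a black box, exactly as the paper does; with that substitution the rest of your case (iv) computation delivers the exponent $2(1-2d_{F}\alpha/\beta)$ and the argument closes.
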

\begin{thm}
 Let $ u : (M,g)\rightarrow (N,h)$ be a $C^{2}$ map satisfying an $\Phi_{F}$-conservation law. Suppose that $d_{F}\leq {1\over 4}$ and the radial curvature $K_{r}$ of $M$ satisfy :\\
$-Ar^{2q}\leq K_{r}\leq -Br^{2q}$ with $A\geq B>0, q>0$ and
$(m-1)B_{0}-4d_{F}\sqrt{A}\coth\sqrt{A}\geq 0$  where
$B_{0}=\min\left\{1,-{q+1\over 2}+\sqrt{B+{(q+1)^{2}\over
4}}\right\}$. Then for $1<R_{1}\leq R_{2}$ we have
$$
{1\over R_{1}^{\lambda}} \int_{B_{R_{1}}(x_{0})\setminus
B_{1}(x_{0})}F\left({\Vert u^{*}h\Vert^{2}\over
4}\right)dv_{g}\leq {1\over R_{2}^{\lambda}}
\int_{B_{R_{2}}(x_{0})\setminus B_{1}(x_{0})}F\left({\Vert
u^{*}h\Vert^{2}\over 4}\right)dv_{g} $$ where
$\lambda=1+(m-1)B_{0}-4d_{F}\sqrt{A}\coth\sqrt{A}$. In particular
if
$$\int_{B_{R}(x_{0})\setminus B_{1}(x_{0})}F\left({\Vert u^{*}h\Vert^{2}\over 4}\right)dv_{g}=o\left(R^{\lambda}\right)$$ then $u$ is constant on $M\setminus B_{1}(x_{0})$.
\end{thm}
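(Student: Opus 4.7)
The plan is to invoke the monotonicity framework of Theorem 3.2 with $\phi(t)=t^2/2$ (so $\phi'(r)=r$, $\phi''(r)=1$), applied on the annular region $B_R(x_0)\setminus B_1(x_0)$ rather than a full ball centred at $x_0$, because the Hessian comparison bounds I need are valid only for $r\ge 1$.

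The first step is the Hessian comparison: from the radial curvature hypothesis $-Ar^{2q}\le K_r\le -Br^{2q}$, I would derive on $r\ge 1$ the estimates
$$
\frac{B_0}{r}\bigl[g-dr\otimes dr\bigr]\le \mathrm{Hess}(r)\le \frac{\sqrt{A}\coth\sqrt{A}}{r}\bigl[g-dr\otimes dr\bigr].
$$
For the lower bound, compare the Jacobi function $f_B$ solving $f_B''-Br^{2q}f_B=0$ with the trial $r^{B_0}$: the identity $B_0(B_0+q+1)=B$ (equivalent to $B_0=-(q+1)/2+\sqrt{B+(q+1)^2/4}$) together with $r\ge 1$ makes $r^{B_0}$ a Sturm sub-solution, so $f_B'/f_B\ge B_0/r$; the clipping $B_0=\min\{1,\cdots\}$ reflects that one cannot outperform the flat model $f\sim r$. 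The upper bound is produced by a Riccati supersolution argument, anchoring at $r=1$ against the constant-curvature-$(-A)$ model and using the monotonicity of $\coth$. With these bounds, the structural hypothesis $\phi'(r)h_2(r)\ge \phi''(r)$ of Theorem 3.2 becomes $\sqrt{A}\coth\sqrt{A}\ge 1$, which is the elementary inequality $x\coth x\ge 1$ for $x>0$. The key integrand then collapses to the constant
$$
\phi''(r)+\bigl((m-1)h_1(r)-4d_Fh_2(r)\bigr)\phi'(r)=1+(m-1)B_0-4d_F\sqrt{A}\coth\sqrt{A}=\lambda,
$$
which is $\ge 1$ by hypothesis, so $H(R)=\lambda/R$ and $G(R)=\lambda\log R$ give $e^{G(R)}=R^\lambda$.

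Next, I run the Stokes identity (8) on $D=B_R(x_0)\setminus B_1(x_0)$ with $X=\phi'(r)\,\partial/\partial r=r\,\partial/\partial r$. The $\Phi_F$-conservation law eliminates $(\mathrm{div}\,S_{\Phi_{F,u}})(X)$, so the identity reduces to a balance between an outer boundary integral on $\partial B_R$, an inner boundary integral on $\partial B_1$, and a bulk integral on the annulus. Bounding the outer term by $\phi'(R)\int_{\partial B_R}F(\Vert u^*h\Vert^2/4)\,ds_g$ exactly as in Theorem 3.2, and using the pointwise estimate $\langle S_{\Phi_{F,u}},\nabla\theta_X\rangle\ge \lambda F(\Vert u^*h\Vert^2/4)$ on $B_R\setminus B_1$, we obtain
$$
\phi'(R)\int_{\partial B_R}F\,ds_g \ge \int_{\partial B_1}S_{\Phi_{F,u}}(\partial/\partial r,\partial/\partial r)\,ds_g+\lambda\int_{B_R(x_0)\setminus B_1(x_0)}F\,dv_g.
$$
At $r=1$ one has $X=\partial/\partial r$ and $S_{\Phi_{F,u}}(\partial/\partial r,\partial/\partial r)=F-F'h(\sigma_u(\partial/\partial r),du(\partial/\partial r))\ge (1-4d_F)F\ge 0$, because $F'\Vert u^*h\Vert^2\le 4d_F F$ by the definition of $d_F$, and here the hypothesis $d_F\le 1/4$ is used crucially. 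Discarding this non-negative inner-boundary term and writing $I(R)=\int_{B_R\setminus B_1}F(\Vert u^*h\Vert^2/4)\,dv_g$ with $I'(R)=\int_{\partial B_R}F\,ds_g$ by coarea, we reach the differential inequality $R\,I'(R)\ge \lambda I(R)$, equivalently $(I(R)/R^\lambda)'\ge 0$. Integrating over $[R_1,R_2]$ gives the claimed monotonicity; letting $R_2\to\infty$ under the growth hypothesis $I(R)=o(R^\lambda)$ forces $I\equiv 0$ on $M\setminus B_1(x_0)$, hence $\Vert u^*h\Vert\equiv 0$ and $u$ is constant on $M\setminus B_1(x_0)$.

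The main obstacle is the Hessian comparison in the variable-curvature setting $q>0$: the Jacobi equation $f_A''-Ar^{2q}f_A=0$ has no closed-form solution, so the upper bound $\sqrt{A}\coth\sqrt{A}/r$ must be produced by a Riccati supersolution argument anchored at $r=1$, and the restriction to the annulus $r\ge 1$ is dictated by this comparison. A secondary point is the inner-boundary sign, which is precisely the content of the hypothesis $d_F\le 1/4$.
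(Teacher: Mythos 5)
Your overall strategy coincides with the paper's: same vector field $X=r\,\partial/\partial r$ (i.e.\ $\phi(t)=t^{2}/2$), same restriction to the annulus $B_{R}(x_{0})\setminus B_{1}(x_{0})$, same use of $d_{F}\le\frac14$ together with $tF'(t)\le d_{F}F(t)$ and $\langle\sigma_{u}(\partial/\partial r),du(\partial/\partial r)\rangle\le\Vert u^{*}h\Vert^{2}$ to discard the inner boundary term, and the same coarea/ODE integration of $R\,I'(R)\ge\lambda I(R)$.

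There is, however, one concrete error: your Hessian comparison is misstated. Under $-Ar^{2q}\le K_{r}\le -Br^{2q}$ with $q>0$ the Greene--Wu estimate (Lemma 3.5(e) in the paper) reads
\begin{equation*}
B_{0}r^{q}\bigl[g-dr\otimes dr\bigr]\le\hbox{Hess}(r)\le\bigl(\sqrt{A}\coth\sqrt{A}\bigr)r^{q}\bigl[g-dr\otimes dr\bigr],\qquad r\ge 1,
\end{equation*}
not $B_{0}/r$ and $\sqrt{A}\coth\sqrt{A}/r$ as you wrote. Your upper bound is in fact false: strongly negative curvature forces $\hbox{Hess}(r)$ to \emph{grow} like $r^{q}$, so it cannot be dominated by $C/r$. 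Consequently your claim that the integrand ``collapses to the constant $\lambda$'' is an artifact of the wrong exponents. With the correct bounds one gets
\begin{equation*}
\phi''+\bigl((m-1)h_{1}-4d_{F}h_{2}\bigr)\phi'=1+\bigl((m-1)B_{0}-4d_{F}\sqrt{A}\coth\sqrt{A}\bigr)r^{q+1}=1+(\lambda-1)r^{q+1}\ge\lambda
\end{equation*}
for $r\ge 1$, using the hypothesis $\lambda-1\ge 0$; this is exactly the paper's computation, and it is only an inequality, not an identity. Since the corrected bound still yields $\langle S_{\Phi_{F,u}},\nabla\theta_{X}\rangle\ge\lambda F$ on the annulus, the rest of your argument (inner boundary sign, differential inequality, integration, and the Liouville conclusion) goes through unchanged; but as written the proof passes through a false Hessian estimate and the exponents must be repaired.
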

For the proof, we will need the following Hessian comparison
theorems of Greene-Wu [6].
\begin{lem}
 Let $(M,g)$ be a complete Riemanian manifold with a pole $x_{0}$. Let $K_{r}$ and $Ric_{r}$ are the respectively the radial sectional and Ricci curvatures of $M$. Then \\
(a) If $-\alpha^{2}\leq K_{r}\leq -\beta^{2}$ with
$\alpha>0,\beta>0$ then
$$\beta\coth(\beta r)\left[g-dr\otimes dr\right]\leq\hbox{Hess}(r)\leq\alpha\coth(\alpha r)\left[g-dr\otimes dr\right]$$
(b) If $K_{r}=0$, then $${1\over r}\left[g-dr\otimes
dr\right]=\hbox{Hess}(r)$$ (c) If $-A(1+r^{2})^{-1-\epsilon}\leq
K_{r}\leq-B(1+r^{2})^{-1-\epsilon}$ with $\epsilon >0, A\geq 0$
and $0\leq B<2\epsilon$, then $${1-{B\over 2\epsilon}\over
r}\left[g-dr\otimes dr\right]\leq\hbox{Hess}(r)\leq{e^{A\over
2\epsilon}\over r}\left[g-dr\otimes dr\right]$$ (d) If
$-\alpha^{2}\leq K_{r}\leq 0$ and $Ric_{r}\leq -\beta^{2}$ where
$\alpha>0,\beta>0$ then
$$\Delta r\geq\beta\coth(\beta r) ,\quad \hbox{Hess}(r)\leq\alpha\coth(\alpha r)[g-dr\otimes dr]$$
(e) If $-Ar^{2q}\leq K_{r}\leq -Br^{2q}$ with $A\geq B>0$ and
$q>0$, then $$B_{0}r^{q}\left[g-dr\otimes
dr\right]\leq\hbox{Hess}(r)\leq(\sqrt{A}\coth\sqrt{A})r^{q}\left[g-dr\otimes
dr\right]$$ for $r\geq 1$, where $B_{0}=\min\left\{1,-{q+1\over
2}+\sqrt{B+{(q+1)^{2}\over 4}}\right\}.$
\end{lem}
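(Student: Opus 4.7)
Since the lemma is a classical Greene--Wu result, my plan is to reproduce its proof via the standard Jacobi-field / Riccati comparison technology; the author will almost certainly just cite [6], but here is how I would carry it out.

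The starting point is that, because $x_0$ is a pole, $\exp_{x_0}$ is a global diffeomorphism, so we have well-defined polar coordinates on $M\setminus\{x_0\}$ and $\nabla r$ is a smooth unit radial field. On the radial direction $\operatorname{Hess}(r)(\partial_r,\cdot)=0$, so the tensor $g-dr\otimes dr$ correctly annihilates the radial part, and it suffices to estimate $\operatorname{Hess}(r)$ on $[\nabla r]^{\perp}$. For a unit vector $v\perp\nabla r$ at $\gamma(r)$ along a unit-speed radial geodesic $\gamma$, I would extend $v$ to the unique Jacobi field $J$ along $\gamma$ with $J(0)=0$ and $J(r)=v$, and use the identity $\operatorname{Hess}(r)(J/|J|,J/|J|)=|J|'/|J|$. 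Writing $\rho(r):=|J|'/|J|$, the Jacobi equation together with the curvature bound along $\gamma$ translates into the scalar Riccati inequality $\rho'+\rho^{2}+K_{r}\lessgtr 0$.

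The remainder is ODE comparison against explicit model solutions; each case of the lemma corresponds to a different model. For (a) the models are the hyperbolic plane Jacobi fields $\sinh(\alpha r)/\alpha$ and $\sinh(\beta r)/\beta$, whose logarithmic derivatives are exactly $\alpha\coth(\alpha r)$ and $\beta\coth(\beta r)$; Sturm/Rauch comparison squeezes $\rho$ between these. Case (b) is the Euclidean model $|J|=r$ giving $\rho=1/r$. For (c) one plugs the ansatz $\rho=c/r$ into the Riccati inequality and checks that $c=1-B/2\epsilon$ is a subsolution and $c=e^{A/2\epsilon}$ is a supersolution, using the integrability of $(1+r^2)^{-1-\epsilon}$. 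Case (d) combines the upper bound from (a) with Bishop's Laplacian comparison applied to the trace under the Ricci hypothesis, which produces $\Delta r\geq\beta\coth(\beta r)$. For (e) the model equation $f''+Br^{2q}f=0$ has no elementary closed form, but the ansatz $\rho(r)=B_{0}r^{q}$ gives $\rho'+\rho^{2}=B_{0}qr^{q-1}+B_{0}^{2}r^{2q}$; requiring this to be at most $Br^{2q}$ for $r\geq 1$ forces $B_{0}^{2}+B_{0}(q+1)\leq B+(q+1)^2/4-(q+1)^2/4+\cdots$, which after completing the square gives exactly $B_{0}\leq -(q+1)/2+\sqrt{B+(q+1)^{2}/4}$, matching the formula in the statement; the upper bound $\sqrt{A}\coth(\sqrt{A})\,r^{q}$ on $r\geq 1$ comes from bounding the Jacobi equation by the constant-curvature $-A$ model on each unit subinterval.

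The only truly delicate step is pinning down the precise constants in (c) and (e): one has to verify that the candidate $\rho$ really is a sub-/supersolution of the Riccati equation under the stated curvature hypotheses, and in (e) the restriction to $r\geq 1$ is essential because the $r^{q-1}$ cross term dominates otherwise. Apart from that, every part is routine once the Jacobi-field-to-Riccati reduction is in place, and the cleanest write-up is simply to quote Greene--Wu [6] and state the five bounds, which is what the author appears to do.
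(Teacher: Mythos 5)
The paper offers no proof of this lemma at all: it is quoted verbatim from Greene--Wu [6] (``For the proof, we will need the following Hessian comparison theorems of Greene--Wu''). Your Jacobi-field/Riccati reconstruction is the standard proof of these estimates and the overall strategy --- reduce $\hbox{Hess}(r)$ on $[\nabla r]^{\perp}$ to the logarithmic derivative $\rho=|J|'/|J|$ of a radial Jacobi field, then run scalar Riccati/Sturm comparison against explicit models --- is correct; cases (a) and (b) are complete as you state them.

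Where the sketch falls short of a proof is exactly where you suspect. In (c), the constant $e^{A/2\epsilon}$ does \emph{not} come out of a sub/supersolution check with the ansatz $\rho=c/r$: that check requires $c(c-1)\geq A\,r^{2}(1+r^{2})^{-1-\epsilon}$ for all $r>0$, which produces a different constant (governed by $\max_{r}r^{2}(1+r^{2})^{-1-\epsilon}=\epsilon^{\epsilon}(1+\epsilon)^{-1-\epsilon}$). The $e^{A/2\epsilon}$ arises instead by integrating the inequality for $\left(\log(f/r)\right)'$, using $\int_{0}^{\infty}Ar(1+r^{2})^{-1-\epsilon}\,dr=A/2\epsilon$. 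In (e), your Riccati computation with $\rho=B_{0}r^{q}$ on $r\geq1$ actually yields the condition $B_{0}^{2}+qB_{0}\leq B$, whereas the stated $B_{0}$ is characterized by $B_{0}^{2}+(q+1)B_{0}\leq B$; the ``$+\cdots$'' in your completed square papers over this mismatch. Since the latter condition implies the former, the stated $B_{0}$ is still admissible for your argument, so there is no error --- but the barrier comparison on $[1,\infty)$ also needs the initial inequality $\rho(1)\geq 1\geq B_{0}$ (this is where $K_{r}\leq-Br^{2q}<0$ and the cap $B_{0}\leq1$ enter), which you omit. Finally, in (d) the implication (upper radial Ricci bound $\Rightarrow$ lower bound on $\Delta r$) is not Bishop's comparison, which goes the other way; it follows from the traced Riccati equation $(\Delta r)'+|\hbox{Hess}(r)|^{2}=-\hbox{Ric}(\partial_{r},\partial_{r})\geq\beta^{2}$ together with $|\hbox{Hess}(r)|^{2}\leq(\Delta r)^{2}$, which in turn uses $\hbox{Hess}(r)\geq0$, a consequence of $K_{r}\leq0$. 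With these three points repaired your proof is complete; as you note, the author simply cites [6].
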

\subsection{Proof of Theorem 3.3}
\begin{proof}\noindent In order to use theorem 3.2, we fix  $\phi(t)={t^{2}\over 2}$.\\ \\
\noindent{\it Case (i)}. Since $-\alpha^{2}\leq K_{r}\leq
-\beta^{2}$ with $\alpha>0,\beta>0$, by comparison theorem
$$\beta\coth(\beta r)\left[g-dr\otimes dr\right]\leq\hbox{Hess}(r)\leq\alpha\coth(\alpha r)\left[g-dr\otimes dr\right]$$
Since $(m-1)\beta-4d_{F}\alpha\geq 0$, by theorem
\begin{eqnarray*}
H(R)&=\left(\phi^{'}(R)\right)^{-1}\displaystyle\inf_{ B(x_{0},R)}\left(\phi^{''}(r)+((m-1)h_{1}(r)-4d_{F}h_{2}(r))\phi^{'}(r)\right)\\
&={1\over R}\displaystyle\inf_{ B(x_{0},R)}\left(1+((m-1)\beta\coth(\beta r)-4d_{F}\alpha\coth(\alpha r)  )r\right)\\
&={1\over R}\inf_{t\in [0,R]}(1+(m-1)\beta t\coth(\beta t)-4d_{F}\alpha t\coth(\alpha t))\\
&={1+(m-1)\beta-4d_{F}\alpha\over R}
\end{eqnarray*}
Thus $G(R)=\log R^{(1+(m-1)\beta-4d_{F}\alpha)}$ which implies the monotonicity inequality.\\ \\
\noindent{\it Case (ii)}. Since $K_{r}=0$ by comparison theorem
$${1\over r}\left[g-dr\otimes dr\right]=\hbox{Hess}(r)$$
In this case
\begin{eqnarray*}
H(R)&=\left(\phi^{'}(R)\right)^{-1}\displaystyle\inf_{ B(x_{0},R)}\left(\phi^{''}(r)+((m-1)h_{1}(r)-4d_{F}h_{2}(r))\phi^{'}(r)\right)\\
&={m-4d_{F}\over R}
\end{eqnarray*}
and $G(R)=\log R^{(m-4d_{F})}$ which implies the monotonicity inequality.\\ \\
\noindent{\it Case (iii)}. Since
$-{A\over(1+r^{2})^{1+\epsilon}}\leq
K_{r}\leq-{B\over(1+r^{2})^{1+\epsilon}}$ with $\epsilon>0, A\geq
0, 0\leq B<2\epsilon$ by comparison theorem
$${1-{B\over 2\epsilon}\over r}\left[g-dr\otimes dr\right]\leq\hbox{Hess}(r)\leq{e^{A\over 2\epsilon}\over r}\left[g-dr\otimes dr\right]$$
In this case
\begin{eqnarray*}
H(R)&=\left(\phi^{'}(R)\right)^{-1}\displaystyle\inf_{ B(x_{0},R)}\left(\phi^{''}(r)+((m-1)h_{1}(r)-4d_{F}h_{2}(r))\phi^{'}(r)\right)\\
&={1+(m-1)(1-{B\over\epsilon})-4d_{F}e^{A\over 2\epsilon}\over R}
\end{eqnarray*}
and $G(R)=\log R^{(1+(m-1)(1-{B\over\epsilon})-4d_{F}e^{A\over 2\epsilon})}$ which implies the monotonicity inequality.\\ \\
\noindent{\it Case (iv)}. Since $-\alpha^{2}\leq K_{r}\leq 0,\
Ric_{r}\leq -\beta^{2}$ where $\alpha\geq\beta>0$, by comparison
theorem
$$\Delta r\geq\beta\coth(\beta r),\quad\hbox{and}\quad \hbox{Hess}(r)\leq\alpha\coth(\alpha r)[g-dr\otimes dr]$$
If $\beta-4d_{F}\alpha\geq 0$, following the proof of theorem
\begin{eqnarray*}
{1\over\phi^{'}(R)}(\Delta\phi(r)& -4d_{F}h_{2}(r)\phi^{'}(r))={1\over R}(1+r\Delta r-4d_{F}\alpha r\coth(\alpha r))\\
&\geq{1+\beta r\coth(\beta r)-4d_{F}\alpha r\coth(\alpha r)\over R}\\
&\geq H(R)={2(1-{2d_{F}\alpha\over\beta})\over R}
\end{eqnarray*}
and $G(R)=\log R^{2(1-{2d_{F}\alpha\over\beta})}$ which implies the monotonicity inequality.\\
\end{proof}
\subsection{Proof of Theorem 3.4}
\begin{proof} By monotonicity formula (9)
$$\int_{\partial B(x_{0},R)}S_{\Phi_{F,u}}(X,\nu)ds_{g}-\int_{\partial B(x_{0},1)}S_{\Phi_{F,u}}(X,\nu)ds_{g}=\int_{B(x_{0},R)\setminus B(x_{0},1)}
\left<S_{\Phi_{F,u}},\nabla\theta_{X}\right>dv_{g}$$ Since
$-Ar^{2q}\leq K_{r}\leq -Br^{2q}$ with $A\geq B>0, q>0$, by
comparison theorem
$$B_{0}r^{q}\left[g-dr\otimes dr\right]\leq\hbox{Hess}(r)\leq(\sqrt{A}\coth\sqrt{A})r^{q}\left[g-dr\otimes dr\right]$$ for $r\geq 1$, where $B_{0}=\min\left\{1,-{q+1\over 2}+\sqrt{B+{(q+1)^{2}\over 4}}\right\}$.\\
Take $X=r{\partial\over\partial r}$, following the proof of
theorem ? we have
\begin{eqnarray*}\left<S_{\Phi_{F},u},\nabla\theta_{X}\right>&\geq(1+(m-1)B_{0}r^{q+1}-4d_{F}\sqrt{A}\coth\sqrt{A})
r^{q+1})F\left({\Vert u^{*}h\Vert^{2}\over 4}\right)\\
&\geq(1+(\lambda-1)r^{q+1})F\left({\Vert u^{*}h\Vert^{2}\over 4}\right)\\
&\geq\lambda F\left({\Vert u^{*}h\Vert^{2}\over 4}\right)
\end{eqnarray*}
and
\begin{eqnarray*}
S_{\Phi_{F},u}\left(X,{\partial\over\partial r}\right)&=F\left({\Vert u^{*}h\Vert^{2}\over 4}\right)-F^{'}\left({\Vert u^{*}h\Vert^{2}\over 4}\right)<\sigma_{u}({\partial\over\partial r}),du({\partial\over\partial r})>\quad\hbox{on}\quad\partial B_{1}(x_{0})\\
S_{\Phi_{F},u}\left(X,{\partial\over\partial
r}\right)&=RF\left({\Vert u^{*}h\Vert^{2}\over
4}\right)-RF^{'}\left({\Vert u^{*}h\Vert^{2}\over
4}\right)<\sigma_{u}({\partial\over\partial
r}),du({\partial\over\partial r})>\quad\hbox{on}\quad\partial
B_{R}(x_{0})
\end{eqnarray*}
Hence
\begin{eqnarray*}
R\int_{\partial B_{R}(x_{0})}\Big\{F\left({\Vert u^{*}h\Vert^{2}\over 4}\right)-F^{'}\left({\Vert u^{*}h\Vert^{2}\over 4}\right)&<\sigma_{u}({\partial\over\partial r}),du({\partial\over\partial r})>\Big\}ds_{g}-\\ &\int_{\partial B_{1}(x_{0})}\Big\{F\left({\Vert u^{*}h\Vert^{2}\over 4}\right)-F^{'}\left({\Vert u^{*}h\Vert^{2}\over 4}\right)<\sigma_{u}({\partial\over\partial r}),du({\partial\over\partial r})\Big\}ds_{g}\\
&\geq\lambda\int_{B_{R}(x_{0})\setminus B_{1}(x_{0})}F\left({\Vert
u^{*}h\Vert^{2}\over 4}\right)dv_{g}
\end{eqnarray*}
Since $\Vert u^{*}h\Vert^{2}\geq<\sigma_{u}({\partial\over\partial
r}),du({\partial\over\partial r})>$ and $tF^{'}(t)\leq d_{F}F(t)$
\begin{equation*}
\int_{\partial B_{1}(x_{0})}\Big\{F\left({\Vert
u^{*}h\Vert^{2}\over 4}\right)-F^{'}\left({\Vert
u^{*}h\Vert^{2}\over 4}\right)<\sigma_{u}({\partial\over\partial
r}),du({\partial\over\partial
r})\Big\}ds_{g}\geq(1-4d_{F})\int_{\partial
B_{1}(x_{0})}F\left({\Vert u^{*}h\Vert^{2}\over
4}\right)ds_{g}\geq 0
\end{equation*}
Hence for $R>1$
\begin{equation*}
R\int_{\partial B_{R}(x_{0})}\Big\{F\left({\Vert
u^{*}h\Vert^{2}\over 4}\right)-F^{'}\left({\Vert
u^{*}h\Vert^{2}\over 4}\right)<\sigma_{u}({\partial\over\partial
r}),du({\partial\over\partial
r})>\Big\}ds_{g}\geq\lambda\int_{B_{R}(x_{0})\setminus
B_{1}(x_{0})}F\left({\Vert u^{*}h\Vert^{2}\over 4}\right)dv_{g}
\end{equation*}
Coarea formula then implies
\begin{equation*}
{{d\over dR}\int_{B_{R}(x_{0})\setminus B_{1}(x_{0})}F\left({\Vert
u^{*}h\Vert^{2}\over 4}\right)dv_{g}\over
\int_{B_{R}(x_{0})\setminus B_{1}(x_{0})}F\left({\Vert
u^{*}h\Vert^{2}\over 4}\right)dv_{g}}\geq{\lambda\over R}
\end{equation*}
for almost all $R\geq 1$. Integrating over $[R_{1},R_{2}]$, we get
\begin{eqnarray*}
\log\left(\int_{B_{R_{2}}(x_{0})\setminus
B_{1}(x_{0})}F\left({\Vert u^{*}h\Vert^{2}\over
4}\right)dv_{g}\right)-&
\log\left(\int_{B_{R_{1}}(x_{0})\setminus B_{1}(x_{0})}F\left({\Vert u^{*}h\Vert^{2}\over 4}\right)dv_{g}\right)\\
&\geq\lambda\log R_{2}-\lambda\log R_{1}
\end{eqnarray*}
which implies monotonicity inequality.
\end{proof}
\section{Constant Dirichlet boundary-value problems}
 In this section we deal with constant Dirichlet boundary-value problems for maps satisfying an $\Phi_{F}$-conservation law. As
 in [5], we introduce  starlike domains with $C^{1}$-boundaries which generalize $C^{1}$-convex domains.
\begin{defn} A bounded domain $D\subset (M,g)$ with $C^{1}$-boundary $\partial D$ is called starlike if there exist an interior point $x_{0}\in D$ such that
$$
<{\partial\over\partial r_{x_{0}}},\nu>_{g}\Big|_{\partial D}\geq
0
$$
where $\nu$ is the unit normal to $\partial D$ and ${\partial\over\partial r_{x_{0}}}$ is
the unit vector field such that for any $x\in D\setminus\{x_{0}\}\cup\partial D$, ${\partial\over\partial r_{x_{0}}}(x)$
is the unit vector tangent to the unique geodesic joining $x_{0}$ to $x$ and pointing away from $x_{0}$.
\end{defn}
\begin{thm}
Let $ u : (M,g)\rightarrow (N,h)$ be a $C^{2}$ map and $D\subset M$ be a starlike domain. Assume that $l_{F}\geq{1\over 2}$ and $u\big|_{\partial D}$ is constant. If $u$ satisfies  an $\Phi_{F}$-conservation law, then $u$ is constant on $D$ provided one of the following conditions is satisfied :\\
(i) $-\alpha^{2}\leq K_{r}\leq -\beta^{2}$ with $\alpha>0,\beta>0$ and $(m-1)\beta-4d_{F}\alpha\geq 0$,\\
(ii) $K_{r}=0$ with $m-4d_{F}>0$,\\
(iii) $-{A\over(1+r^{2})^{1+\epsilon}}\leq K_{r}\leq-{B\over(1+r^{2})^{1+\epsilon}}$ with $\epsilon>0, A\geq 0, 0\leq B<2\epsilon$ and $1+(m-1)(1-{B\over 2\epsilon})-4d_{F}e^{A\over 2\epsilon}>0$,\\
(iv) $-\alpha^{2}\leq K_{r}\leq 0,\ Ric_{r}\leq -\beta^{2}$ where
$\alpha>0,\beta>0,\ \beta-4d_{F}\alpha\geq 0$.
\end{thm}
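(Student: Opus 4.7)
The plan is to apply the integral identity (8) to the starlike domain $D$ with the radial vector field $X = r\,\partial/\partial r_{x_0}$, exactly as in Theorems 3.2 and 3.3, and then to exploit the constant Dirichlet condition in order to force the boundary integral to be nonpositive while the interior integrand remains nonnegative under each of the listed curvature assumptions. Since $u$ satisfies the $\Phi_F$-conservation law, the term $(\operatorname{div} S_{\Phi_F,u})(X)$ vanishes, so (8) reduces to $\int_{\partial D} S_{\Phi_F,u}(X,\nu)\,ds_g = \int_D \langle S_{\Phi_F,u},\nabla\theta_X\rangle\,dv_g$.

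For the boundary term I would choose a local orthonormal frame $\{e_1,\ldots,e_{m-1},e_m=\nu\}$ adapted to $\partial D$. Since $u|_{\partial D}$ is constant, $du$ annihilates the tangential directions, so only the normal derivative survives: $\|u^*h\|^2 = |du(\nu)|^4$ and $\sigma_u(X) = \langle X,\nu\rangle\,|du(\nu)|^2\,du(\nu)$. Substituting into (6) collapses the boundary integrand to $\langle X,\nu\rangle\bigl[F(t)-4tF'(t)\bigr]$ with $t = |du(\nu)|^4/4$. The hypothesis $l_F \geq 1/2$ gives $4tF'(t) \geq 4l_F F(t) \geq 2F(t) \geq F(t)$, hence $F(t)-4tF'(t)\leq 0$, while the starlike assumption ensures $\langle X,\nu\rangle = r\langle\partial/\partial r_{x_0},\nu\rangle \geq 0$ on $\partial D$. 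The boundary integral is therefore $\leq 0$.

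For the interior term I would recycle the pointwise calculation from the proof of Theorem 3.2 with $\phi(t)=t^2/2$, obtaining $\langle S_{\Phi_F,u},\nabla\theta_X\rangle \geq \bigl(1+((m-1)h_1(r)-4d_F h_2(r))\,r\bigr)\,F(\|u^*h\|^2/4)$. In each of the four cases the Greene--Wu Hessian comparison (Lemma 3.5) supplies $h_1,h_2$ for which the arithmetic carried out in the proof of Theorem 3.3 produces a strictly positive lower bound $c_0>0$ for the parenthetic coefficient on $D$ (for instance $c_0 = 1+(m-1)\beta-4d_F\alpha$ in case (i)). Hence the right-hand side is $\geq c_0\int_D F(\|u^*h\|^2/4)\,dv_g \geq 0$. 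Sandwiching $0 \geq \mathrm{LHS} = \mathrm{RHS} \geq 0$ forces $\int_D F(\|u^*h\|^2/4)\,dv_g=0$; since $F>0$ on $(0,\infty)$, this gives $\|u^*h\|\equiv 0$ on $D$ and therefore $u$ is constant on $D$.

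The principal obstacle is the boundary analysis: correctly reducing $S_{\Phi_F,u}(X,\nu)$ to a signed quantity using the Dirichlet condition, and recognizing that $l_F \geq 1/2$ is exactly the threshold that makes the resulting integrand nonpositive when paired with the starlikeness inequality $\langle\partial/\partial r_{x_0},\nu\rangle\geq 0$. Once that is in place, the interior estimate is essentially verbatim from Theorem 3.3, with the geodesic ball replaced by the general starlike domain $D$, so no new curvature computation is required.
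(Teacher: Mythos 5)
Your proposal is correct and follows essentially the same route as the paper: apply the integrated stress--energy identity to $D$ with $X=r\,\partial/\partial r_{x_0}$, use the constant Dirichlet condition to reduce the boundary integrand to $r\langle\partial/\partial r_{x_0},\nu\rangle\bigl(F-F'\|u^*h\|^2\bigr)\le 0$ via $l_F$ and starlikeness, bound the interior term below by $K\int_D F$ using the same Hessian comparison constants as Theorem 3.3, and sandwich. The only cosmetic difference is that the paper writes the boundary bound as $(1-4l_F)F\le 0$ while you argue $4tF'(t)\ge 2F(t)$ directly; both are the same estimate.
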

\begin{proof}
Let the vector field $X=r_{x_{0}}\nabla r_{x_{0}}$. Under the
radial curvatures conditions, following the proof of theorem 2.2,
localized on $\overline D$, we get
$$\left<S_{\Phi_{F}},\nabla\theta_{X}\right>\geq KF\left({\Vert u^{*}h\Vert^{2}\over 4}\right)\quad \hbox{on}\quad D$$
where
\begin{eqnarray*}
K&=1+(m-1)\beta-4d_{F}\alpha &\hbox{case (i)}\\
&=m-4d_{F}&\hbox{case (ii)}\\
&=1+(m-1)(1-{B\over\epsilon})-4d_{F}e^{A\over 2\epsilon}&\hbox{case (iii)}\\
&=2(1-{2d_{F}\alpha\over\beta})&\hbox{case (vi)}
\end{eqnarray*}
Let $x\in\partial D$ and choose  a local orthonormal frame field
$\{ e_{1},\cdots,e_{m-1},\nu\}$ on $T_{x}M$ such that $\{
e_{1},\cdots,e_{m-1}\}$ is a orthonormal frame field on
$T_{x}\partial D$. Since $u|_{\partial D}$ is constant, we get
$du(e_{i})=0,\ i=1,\cdots m-1$ and $du({\partial\over\partial
r_{x_{0}}})=<{\partial\over\partial r_{x_{0}}},\nu>_{g}du(\nu)$.
Hence at $x$
\begin{eqnarray*}
S_{\Phi_{F},u}(X,\nu)&=r_{x_{0}}S_{\Phi_{F},u}({\partial\over\partial r},\nu)\\
&=r_{x_{0}}\left(F\left({\Vert u^{*}h\Vert^{2}\over 4}\right)<{\partial\over\partial r},\nu>_{g}-F^{'}\left({\Vert u^{*}h\Vert^{2}\over 4}\right)<\sigma_{u}({\partial\over\partial r}),du(\nu)>_{h}\right)\\
&=r_{x_{0}}<{\partial\over\partial r_{x_{0}}},\nu>_{g}\left(F\left({\Vert u^{*}h\Vert^{2}\over 4}\right)-F^{'}\left({\Vert u^{*}h\Vert^{2}\over 4}\right)\Vert u^{*}h\Vert^{2}\right)\\
&\leq r_{x_{0}}<{\partial\over\partial
r_{x_{0}}},\nu>_{g}(1-4l_{F})F\left({\Vert u^{*}h\Vert^{2}\over
4}\right)\leq 0
\end{eqnarray*}
Hence $S_{\Phi_{F},u}(X,\nu)\leq 0$ on $\partial D$. Since $u$
satisfies an $\Phi_{F}$-conservation law, by monotonicity formula
$$
0\leq K\int_{D}F\left({\Vert u^{*}h\Vert^{2}\over
4}\right)dv_{g}\leq\int_{D}\left<S_{\Phi_{F}},\nabla\theta_{X}\right>dv_{g}=\int_{\partial
D}S_{\Phi_{F},u}(X,\nu)ds_{g}\leq 0
$$
which implies that $u\big|_{D}$ is constant.
\end{proof}
\section{Generalized Chern type results}
In this section, we deal with the following  mean $F$-curvature
type equation for   $TN$-valued sections over  on $(M,g)$ :
$$
 u : (M,g)\rightarrow (N,h)\quad \hbox{ with}\quad
 \hbox{div}_{g}\sigma_{F,u}=s
$$
where $s : M\rightarrow u^{-1}TN$ is a $C^{1}$ section,
$$\sigma_{F,u}=F^{'}\left({\Vert u^{*}h\Vert^{2}\over
4}\right)\sigma_{u}$$ where $\sigma_{u}$ is the $\mathbb R$-valued
$1$-form  on $M$  defined by
$$\sigma_{u}=\sum_{i=1}^{m}<du(.),du(e_{i})>_{h}du(e_{i})$$ We
observe that $$\Vert\sigma_{F,u}\Vert\leq F^{'}\left({\Vert
u^{*}h\Vert^{2}\over 4}\right)\Vert u^{*}h\Vert^{2}\leq
4d_{F}F\left({\Vert u^{*}h\Vert^{2}\over 4}\right)$$ where $d_{F}$
is the $F$-degree. \noindent We recall that $(M,g)$ is said to
have the doubling property if there exist a constant $D(M)>0$ such
that $\forall R>0,\ \forall x\in M$,
$$
\hbox{Vol}_{g}(B(x,2R))\leq D(M)\hbox{Vol}_{g}(B(x,R)).$$
\begin{thm} Let $(M,g)$ be a complete non-compact Riemannian manifold with doubling property. Let  $u :(M,g)\rightarrow (N,h)$ be a $C^{2}$ map such that
$$\hbox{div}_{g}\sigma_{F,u}=s$$
 off a bounded set $K\subset M$ where $s$ is a parallel $C^{1}$ section of $u^{-1}TN$ over $M$ i.e ${\tilde\nabla}s=0$. Let $\phi : ]0,+\infty[\rightarrow ]0,+\infty[$ be an increasing function . Assume that
$$\sup_{x\in B(R)}\Vert s(x)\Vert\phi(r(x))=o\left(R\right)$$
 If    $$\limsup_{R\rightarrow\infty}{1\over\hbox{Vol}_{g}(B(R))}\int_{B(R)}{\Vert\sigma_{F,u}\Vert\over\phi(r(x))}dv_{g}
 <\infty$$
then $\inf_{x\in M\setminus K}\Vert s(x)\Vert=0$.
\end{thm}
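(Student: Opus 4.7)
The plan is to argue by contradiction via a single integration-by-parts identity for a scalar $1$-form built from $\sigma_{F,u}$ and the parallel section $s$. First I would define the $\mathbb{R}$-valued $1$-form $\omega$ on $M$ by $\omega(X)=\langle\sigma_{F,u}(X),s\rangle_{h}$. Since $\tilde\nabla s=0$ everywhere, a direct computation in a local orthonormal frame $\{e_{i}\}$ gives
$$
\mathrm{div}_{g}\omega=\sum_{i=1}^{m}\bigl(\langle\tilde\nabla_{e_{i}}\sigma_{F,u}(e_{i}),s\rangle_{h}-\langle\sigma_{F,u}(\nabla_{e_{i}}e_{i}),s\rangle_{h}\bigr)=\langle\mathrm{div}_{g}\sigma_{F,u},s\rangle_{h},
$$
which equals $\|s\|^{2}$ on $M\setminus K$ by hypothesis and is a bounded continuous function on $K$.

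Next I would fix a Lipschitz cutoff $\eta_{R}:M\to[0,1]$ with $\eta_{R}\equiv 1$ on $B(R)$, supported in $B(2R)$, and $|\nabla\eta_{R}|\le C/R$. For $R$ so large that $K\subset B(R)$, applying Stokes's theorem to the compactly supported vector field $\eta_{R}\omega^{\sharp}$ (using $\mathrm{div}_{g}(\eta_{R}\omega^{\sharp})=\eta_{R}\mathrm{div}_{g}\omega+\omega(\nabla\eta_{R})$) yields
$$
\int_{M\setminus K}\eta_{R}\|s\|^{2}\,dv_{g}=-C_{K}-\int_{M}\omega(\nabla\eta_{R})\,dv_{g},
$$
where $C_{K}:=\int_{K}\langle\mathrm{div}_{g}\sigma_{F,u},s\rangle_{h}\,dv_{g}$ is a fixed finite constant. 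The pointwise bound $|\omega(\nabla\eta_{R})|\le\|\sigma_{F,u}\|\,\|s\|\,|\nabla\eta_{R}|$ together with the splitting $\|\sigma_{F,u}\|\,\|s\|=\bigl(\|s\|\phi(r)\bigr)\cdot\bigl(\|\sigma_{F,u}\|/\phi(r)\bigr)$ produces
$$
\Bigl|\int_{M}\omega(\nabla\eta_{R})\,dv_{g}\Bigr|\le\frac{C}{R}\,\sup_{B(2R)}\!\bigl(\|s\|\phi(r)\bigr)\int_{B(2R)}\frac{\|\sigma_{F,u}\|}{\phi(r)}\,dv_{g}.
$$

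The first factor is $o(1)$ by the growth assumption $\sup_{B(R)}\|s\|\phi(r)=o(R)$ (trivially inherited on $B(2R)$ after rescaling). The doubling property $\mathrm{Vol}_{g}(B(2R))\le D(M)\mathrm{Vol}_{g}(B(R))$, combined with the $\limsup$ hypothesis, bounds $\int_{B(2R)}\|\sigma_{F,u}\|/\phi(r)\,dv_{g}$ by $C'\mathrm{Vol}_{g}(B(R))$ for $R$ large, so the whole error term is $o(\mathrm{Vol}_{g}(B(R)))$. Setting $c:=\inf_{M\setminus K}\|s\|$ and supposing $c>0$, the lower bound $\|s\|^{2}\ge c^{2}$ off $K$ gives
$$
c^{2}\bigl(\mathrm{Vol}_{g}(B(R))-\mathrm{Vol}_{g}(K)\bigr)\le|C_{K}|+o(\mathrm{Vol}_{g}(B(R))),
$$
and dividing by $\mathrm{Vol}_{g}(B(R))\to\infty$ (from non-compactness of $M$) and letting $R\to\infty$ forces $c\le 0$, a contradiction.

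The main obstacle is the error-estimate step: the three hypotheses are finely tuned so that the product $\|s\|\phi\cdot\|\sigma_{F,u}\|/\phi$ splits into one sup-factor of subcritical growth and one averaged $L^{1}$-factor of bounded doubling-average, which must then be absorbed simultaneously against the $1/R$ coming from $|\nabla\eta_{R}|$ and against the doubling factor from $B(2R)$ versus $B(R)$. Everything else — the divergence identity for $\omega$, the cutoff integration by parts, and the final contradiction — is routine.
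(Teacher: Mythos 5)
Your argument is correct and is essentially the paper's own proof: your vector field $\omega^{\sharp}$ is exactly the paper's $Z$ with $t=\psi s$, and the cutoff integration by parts, the pointwise splitting $\Vert s\Vert\,\Vert\sigma_{F,u}\Vert=\bigl(\Vert s\Vert\phi(r)\bigr)\cdot\bigl(\Vert\sigma_{F,u}\Vert/\phi(r)\bigr)$, and the doubling estimate passing from $B(2R)$ to $B(R)$ all coincide with the paper's steps (your version is in fact cleaner, since the paper's detour through a bound on $\Vert\tilde\nabla s\Vert$ and a mean value inequality is vacuous when $s$ is parallel). The one point to tighten is the final limit: $\mathrm{Vol}_{g}(B(R))\to\infty$ does not follow from non-compactness alone (a complete non-compact manifold can have finite volume) but from completeness, non-compactness \emph{and} the doubling property together, which is precisely the fact the paper invokes from its reference [9].
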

\begin{proof}
For any section $t$ of $u^{-1}TN$ over $M$, let $Z$ be the vector
field on $M$ defined by $$<Z,X>_{g}=<t,\sigma_{F,u}(X)>_{h}$$ for
all vector fields $X$ on $M$. We choose a local orthonormal frame
field $\{e_{i}\}_{i=1}^{m}$ on $M$ and we compute
\begin{eqnarray*}\hbox{div}_{g}Z &=&\sum_{i=1}^{m}<\nabla^{M}_{e_{i}}Z,e_{i}>_{g}\\
&=&\sum_{i=1}^{m}e_{i}<Z,e_{i}>_{g}
+\sum_{i=1}^{m}\big\{<{\tilde\nabla}_{e_{i}}t,i_{e_{i}}\sigma_{F,u}>
+<t,{\tilde\nabla}_{e_{i}}\sigma_{F,u}(e_{i})>\big\}\\
&=&<\sum_{i=1}^{m}\theta_{e_{i}}\wedge{\tilde\nabla}_{e_{i}}t,\sigma_{F,u}>+
<t,\sum_{i=1}^{m}{\tilde\nabla}_{e_{i}}\sigma_{F,u}(e_{i})>\\
&=&<{\tilde\nabla}t,\sigma_{F,u}>+<t,\hbox{div}_{g}\sigma_{F,u}>
\end{eqnarray*}
where $\tilde\nabla$ is the induced connection on  $u^{-1}TN$ from
$\nabla^{M}$ and $\nabla^{N}$ and $\theta_{e_{i}}$ the dual
$1$-form of $e_{i}$. Then for any bounded open set $D\subset
M\setminus K$ with smooth boundary $\partial D$, we have
\begin{equation}
\int_{\partial
D}\left<t,\sigma_{F,u}(\nu)\right>ds_{g}=\int_{D}\left<{\tilde\nabla}
t,\sigma_{F,u}\right>_{h}dv_{g}+\int_{D}\left<t,\hbox{div}_{g}\sigma_{F,u}\right>_dv_{g}
\end{equation}
where $\nu$ denotes the unit outward normal vector field on
$\partial D$. The formula (5.1) with $t=\psi s$, where $\psi\in
C^{2}(M,\mathbb R^{+})$, gives
\begin{eqnarray*}
\int_{D}\psi\Vert s(x)\Vert_{h}^{2}dv_{g}&=&-\int_{D}<d\psi\otimes
s,\sigma_{F,u}>dv_{g}-\int_{D}\psi<{\tilde\nabla}s,\sigma_{F,u}>dv_{g}\\
&{}&\quad +\int_{\partial
D}\psi<s,\sigma_{F,u}(\nu)>ds_{g}
\end{eqnarray*}
Since $K\subset M$ is compact, choose a sufficiently large $R_{0}
<R$ such that $K\subset B(x_{0},R_{0})$. Let $0\leq\psi\leq 1$ be
the cut-off function i.e $\psi=1$ on $\overline B(x_{0},R),\
\psi=0$ off $B(x_{0},2R)$, and $\vert\nabla\psi\vert\leq{C\over
R}$. The formula (14) with $D=B(x_{0},2R)\setminus B(x_{0},R_{0})$
implies
\begin{eqnarray*}
\inf_{x\in M\setminus K}\Vert s(x)\Vert^{2}\left(1-{V(R))\over
V(R_{0}}\right)&\leq &
{C\over RV(R)}\int_{B(x_{0},2R)\setminus B(x_{0},R_{0})}\Vert s\Vert\Vert\sigma_{F,u}\Vert dv_{g}+\\
&{}& {1\over V(R)}\int_{B(x_{0},2R)\setminus
B(x_{0},R_{0})}\Vert{\tilde\nabla}s\Vert\Vert\sigma_{F,u}\Vert
dv_{g}+ \\
&{}& {1\over V(R)}\int_{\partial B(x_{0},R_{0})}\Vert
s\Vert\Vert\sigma_{F,u}(\nu)\Vert ds_{g}
\end{eqnarray*}
where $V(R)=\hbox{Vol}_{g}(B(x_{0},R))$. Since $\sup_{x\in
B(R)}\Vert{\tilde\nabla}s(x)\Vert=o\left({1\over\phi(R)}\right)$,
for each $\eta>0$ 
$$\sup_{x\in
B(R)}\Vert{\tilde\nabla}s(x)\Vert\leq{\eta\over\phi(R)}\quad R\geq
R_{1}$$ By mean value inequality $$\Vert
s(x)-s(x_{0})\Vert\leq\eta{R\over\phi(R)}$$ in $B(x_{0},R)$ for
$R\geq R_{1}$. By doubling property, the inequality (15) become
\begin{eqnarray*}
\inf_{x\in M\setminus K}\Vert s(x)\Vert^{2}\left(1-{V(R_{0})\over
V(R)}\right)&\leq & CD(M)\left(\eta +\Vert
s(x_{0})\Vert{\phi(2R)\over R}\right){1\over
V(2R)}\int_{B(2R)}{\Vert\sigma_{F,u}\Vert\over\phi(r(x))}dv_{g}+\\
&{}& 
\eta{
 D(M)\over V(2R)}\int_{B(2R)}{\Vert\sigma_{F,u}\Vert\over\phi(r(x))}dv_{g}+
{1\over V(R)}\int_{\partial B(x_{0},R_{0})}\Vert
s\Vert\Vert\sigma_{F,u}(\nu)\Vert ds_{g}
\end{eqnarray*}
where $D(M)$ is the constant in doubling property. Since $(M,g)$
is complete, the doubling property implies that $(M,g)$ has
infinite volume [9]. Letting  $R$  to infinity,  we get
$$
\inf_{x\in M\setminus K}\Vert s(x)\Vert^{2}\leq \eta
D(M)(C+1)\limsup_{R\rightarrow\infty}{1\over
V(2R)}\int_{B(2R)}{\Vert\sigma_{F,u}\Vert\over\phi(r(x))}dv_{g}
$$
and $\inf_{x\in M\setminus K}\Vert s(x)\Vert=0$ since  $\eta$ is
arbitrary.
\end{proof}
\noindent The following is an analogous of  Chern's result [4] :
\begin{cor} Let $(M,g)$ be a complete non-compact Riemannian manifold with doubling property and $(N,h)$ be a Riemannian manifold. Let  $u :(M,g)\rightarrow (N,h)$ be a $C^{2}$ map such that
$\hbox{div}_{g}\sigma_{F,u}=s$
  where $s$ is a constant section of $u^{-1}TN$. Let $\phi : ]0,+\infty[\rightarrow ]0,+\infty[$ be an increasing function such that $\phi(R)=o(R)$. If
\begin{equation*}\limsup_{R\rightarrow\infty}{1\over V(R)}\int_{B(R)}{\Vert\sigma_{F,u}\Vert\over\phi(r(x))}dv_{g}
<\infty\end{equation*} then $u$ is a stationary map for the
functional $\Phi_{F}$.
\end{cor}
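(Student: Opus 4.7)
The plan is to recognize this corollary as a direct specialization of Theorem~5.1. I would apply the theorem with the bounded exceptional set $K = \emptyset$ and show that the two standing hypotheses of the theorem are trivially satisfied by our stronger hypotheses, forcing $\|s\|=0$, which is exactly the stationarity condition $\mathrm{div}_g\sigma_{F,u}=0$.

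First I would verify the parallelism hypothesis. A \emph{constant} section $s$ of $u^{-1}TN$, in the Chern-type sense, means a section that is parallel with respect to the pullback connection, i.e.\ $\tilde\nabla s = 0$. In particular $\|s(x)\|_h$ is a constant (call it $c$), so the formula $\mathrm{div}_g\sigma_{F,u}=s$ automatically holds off the empty set, which is a (vacuously) bounded subset of $M$.

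Next I would check the growth condition on $s$. Because $\|s(x)\|=c$ is constant and $\phi$ is increasing, one has
\begin{equation*}
\sup_{x\in B(R)} \|s(x)\|\,\phi(r(x)) \;=\; c\,\phi(R),
\end{equation*}
and the hypothesis $\phi(R) = o(R)$ gives exactly $\sup_{x\in B(R)} \|s(x)\|\,\phi(r(x)) = o(R)$, as required by Theorem~5.1. The $\limsup$ integral hypothesis on $\|\sigma_{F,u}\|/\phi(r)$ is assumed verbatim, so all hypotheses of Theorem~5.1 are in force with $K = \emptyset$.

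Finally I would invoke the conclusion of Theorem~5.1, which yields $\inf_{x\in M}\|s(x)\| = 0$. Since $\|s\|$ is the constant $c$, this means $c=0$, hence $s\equiv 0$, so $\mathrm{div}_g\sigma_{F,u} = 0$ on $M$; by Definition~2.2 this says $u$ is stationary for $\Phi_F$. There is no real obstacle here: the only conceptual point is the clarification that ``constant section'' is to be read as parallel (so that $\tilde\nabla s = 0$ and $\|s\|$ is literally constant), after which everything reduces to plugging into Theorem~5.1.
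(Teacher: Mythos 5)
Your proposal is correct and is exactly the argument the paper intends (the corollary is stated without a separate proof, as an immediate specialization of Theorem 5.1): reading ``constant section'' as $\tilde\nabla s=0$ so that $\Vert s\Vert\equiv c$, the growth hypothesis reduces to $c\,\phi(R)=o(R)$, and the theorem forces $c=0$, i.e.\ $\hbox{div}_{g}\sigma_{F,u}=0$. No gaps.
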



\begin{thebibliography}{0}
\bibitem{1} M.Ara,{\it Geometry of $F$-harmonic maps}. Kodai Math.J. 22 (1999) 243-263.
\bibitem{2} P.Baird, J.Eells,{\it A conservation law for harmonic maps}. in geometry Symposium Utrecht 1980
\bibitem{3} P.Baird,{\it Stress-energy tensors and the Lichnerowicz laplacian}. J.Geom.Phys. 58 (2008) no 10 1329-1342.
\bibitem{4} S.S.Chern,{\it  On the curvature of a piece of hypersurface in Euclidean space}. Abb.Math.Sem.Humburg 29 (1965) 71-91.
\bibitem{5} Y.X.Ding , S.W.Wei,{\it  On vanishing theorems for vector bundle valued $p$-forms and their applications.} Communications in Mathematical Physics 2011, Volume 304, Issue 2, pp 329-368.
\bibitem{6} R.E.Greene, H.Wu : Fonction Theory on manifolds which posses a pole. Lecture notes in mathematics 699 (1979) Springer-Verlag.
\bibitem{7} S.Kawai, N.Nakauchi : Some results for stationary maps of a functional related to pullback metrics.
Nonlinear Analysis (2011), Vol 74, Issue 6, pages 2284-2295.
\bibitem{8} M.Kassi, {\it  A liouville theorem for $F$-harmonic maps with finite $F$-energ}. EJDE 15 (2006) 1-9.
\bibitem{9} Y.I.Lee, A.N.Wang, S.W.Wei, {\it On a generalized $1$-harmonic equation and the inverse mean curvature flow}
. Journal of Geometry and Physics 61, Issue 2, February 2011, Pages 453-461.
\bibitem{10} N.Nakauchi and Y.Takenaka, {\it  A variational problem for pullback metrics}. Ricerche Di Mathematica (2011) Vol 60, Number 2, pages 219-225
\end{thebibliography}
\end{document}